\theoremstyle{plain}
\numberwithin{equation}{section}
\newtheorem{theorem}{Theorem}[section]
\newtheorem{lemma}[theorem]{Lemma}
\newtheorem{corollary}[theorem]{Corollary}
\newtheorem{proposition}[theorem]{Proposition}
\newcommand{\C}{\mathbb{C}}
\theoremstyle{definition}
\newtheorem{definition}[theorem]{Definition}
\theoremstyle{remark}
\newcommand{\cC}{{\mathcal C}}
\newcommand{\Rep}{\operatorname{Rep}}
\newcommand{\cD}{{\mathcal D}}
\newcommand{\ot}{{\otimes}}
\newcommand{\ku}{\mathbbm{k}}
\newcommand{\Z}{\mathbb{Z}}
\newcounter{commentcounter}
\title{Equivariant fusion subcategories}
\author[C\'esar Galindo]{C\'esar Galindo}
\address{ Departamento de Matem\'aticas, Universidad de los Andes, Bogot\'a, Colombia}
\email{cn.galindo1116@uniandes.edu.co}
\author[Corey Jones]{Corey Jones}
\address{ Department of Mathematics, North Carolina State University }
\email{cmjones6@ncsu.edu}
\date{}
\begin{document}

\begin{abstract}
We provide a parameterization of all fusion subcategories of the equivariantization by a group action on a fusion category. As applications, we classify the Hopf subalgebras of a family of semisimple Hopf algebras of Kac-Paljutkin type and recover Naidu-Nikshych-Witherspoon classification of the fusion subcategories of the representation category of a twisted quantum double of a finite group. 
\end{abstract}

\thanks{C\'esar Galindo was partially supported by Faculty of Science of Universidad de los Andes, Convocatoria
2020-2022 para la Financiación de Programas de Investigación. Corey Jones was supported by NSF grant DMS 1901082/2100531.
 }
\maketitle
\section{Introduction}

Fusion categories generalize the representation categories of finite-di\-men\-sional semi-simple (quasi) Hopf algebras \cite{ENO}.  
A fundamental construction in the theory of fusion categories is equivariantization by a group action, \cite{EGNO}. Given a categorical action of a finite group $G$ on a fusion category $\mathcal{C}$, the equivariantization $\cC^{G}$ is a new fusion category, consisting of categorical ``fixed points" under the $G$ action. Equivariantization, and its reciprocal construction called de-equivariantization, have been extensively studied and used in recent years to classify and characterize families of fusion categories, see \cite{ENO2}, \cite{DGNO}. The goal of this note is to identify the lattice of fusion subcategories of $\cC^{G}$ in terms of the lattice of fusion subcategory of $\cC$ and cohomological data arising from the $G$-action (Theorem \ref{MainTheorem}).

If a fusion category $\cC$ is equivalent to the category of (right) comodules of a finite-dimensional semisimple Hopf algebra $H$, then the lattice of fusion subcategories of $\cC$ is isomorphic to the lattice of Hopf subalgebras of $H$, see \cite[Theorem 6]{MR1375826}. Many interesting Hopf algebras (in particular  Hopf algebras of Kac-Paljutkin type and quantum groups at roots of unity) contain a central Hopf subalgebra $\C^G$, and this implies the existence of a central exact sequence of Hopf algebras of the form \[\ku \to \ku^G\to H\to Q\to \ku.\] In this case, the category of $H$-comodules is a $G$-equivariantization of the category of $Q$-comodules, \cite{MR3160718}.  Thus we can apply our results to obtain a parameterization of the Hopf subalgebras of central extensions of semisimple Hopf algebras, particularly to  Hopf algebras of Kac-Paljutkin type associated to semidirect products of groups.

It is well known how to parameterize the simple objects of the equivariantization and compute their fusion rules in terms of the data of the $G$ action (for example, \cite{MR3059899}). Fusion subcategories of a fusion category $\cD$ correspond with fusion sub-rules of the Grothendieck ring $K_0(\cD)$. Hence, in principle, having the fusion rules of $\cC^G$, the computation of fusion subcategories of $\cC^G$ reduces to the computation of fusion sub-rules of $K_0(\cC^G)$. However, even the computation of $K_0(\cC^G)$ and its multiplication could be computationally demanding. Therefore, it is desirable to obtain a parameterization of the fusion subcategories of an equivariantization directly in terms of the data of the group action.

After posting an initial version of this paper to the arXiv, Dmitri Nikshych brought to our attention that our Theorem \ref{MainTheorem} was previously obtained by Alex Levin and presented at conferences in 2017. Levin's work has not yet been made publicly available. We thank Dmitri for bringing this to our attention.

The paper is organized as follows. In Sections \ref{preliminares} and \ref{SeqEq} we recall the basic definitions and results concerning equivariantization by a categorical group action and de-equivariantization of a fusion containing a central inclusion of $\operatorname{Rep}(G)$. In Section \ref{main results}, we prove our main result, which provides a parameterization of equivariant fusion subcategories of a fusion category. In Section \ref{pointed examples}, we take a closer look at the pa\-ra\-me\-te\-rization in the case of group actions on pointed fusion categories, providing an obstruction theory for $G$-invariant trivializations. Finally, in Section \ref{application}, we apply this to parameterize Hopf subalgebras of a class of Kac-Paljutkin type studied in \cite{MR1307564} and \cite{MR1448809}.

\section{Preliminaries}\label{preliminares}

A fusion category is a finitely semi-simple rigid tensor category. In this paper, we assume our field is algebraically closed and characteristic $0$. We refer the reader to the comprehensive reference \cite{EGNO} for categorical background. In this section, we will recall some basic notions and set up notation for categorical actions of groups on fusion categories and equivariantization.

Let $\underline{\text{Aut}}(\mathcal{C})$ denote the 2-group of tensor autoequivalences of a fusion category and tensor natural isomorphisms with tensor product given by composition of tensor functors. A \textit{categorical action} of $G$ on $\mathcal{C}$ is a strong monoidal functor $ \underline{G}\rightarrow \underline{\text{Aut}}(\mathcal{C}) $. We denote the assignment $g\mapsto g_{*}$, and the monoidal natural isomorphisms $\mu_{g,h}: g_{*}\circ h_{*}\rightarrow gh_{*}$. We denote the component of a natural isomorphism with a superscript, i.e., the $x$ component of $\mu_{g,h}$ is denoted $\mu^{x}_{g,h}$ for $x\in \cC$.

Given a categorical action of $G$ on $\mathcal{C}$, recall that the equivariantization $\mathcal{C}^{G}$ is a new fusion category whose objects are \textit{equivariant objects}, given by pairs $(x, \rho)$, where $x\in \mathcal{C}$ and $\rho=\{\rho_{g}: g_{*}(x)\cong x\}_{g\in G}\ $ is a family of isomorphisms satisfying $\mu^{x}_{g,h}\rho_{g}g_{*}(\rho_{h})=\rho_{gh}$ for all $g,h  \in G$. Morphisms between equivariant objects $(x, \rho)$ and $(y, \delta)$ are morphisms $f\in \cC(x,y)$ such that for all $g\in G$, $f\rho_{g}=\delta_{g}g_{*}(f)$.

This forms a new monoidal category called the equivariantization, denoted $\mathcal{C}^{G}$. The product $(x,\rho)\otimes (y, \delta)$ is defined by

$$(x\otimes y, (\rho\otimes \delta)\circ\beta^{x,y}),$$
where $\beta^{x,y}_{g}:g_{*}(x\otimes y)\cong g_{*}(x)\otimes g_{*}(y)$ is the tensorator of the monoidal equivalence $g_{*}$.

An important point of note is that since the unit object of $\mathcal{C}$ is simple, there is a canonical copy of $\text{Rep}(G)\le \mathcal{C}^{G}$ embedded as a full subcategory, consisting of equivariant objects which are direct sums of the unit object $\mathbbm{1}$. Furthermore, this subcategory naturally lifts to the center $\mathcal{Z}(\mathcal{C}^{G})$. Fusion categories $\cC$ equipped with a fully-faithful monoidal functor $F:\text{Rep}(G)\rightarrow \mathcal{C}$ that factors through a braided monoidal functor $\text{Rep}(G)\rightarrow \mathcal{Z}(\mathcal{C})$ fusion categories \textit{over} $\text{Rep}(G)$.

Both $G$-fusion categories and fusion categories over $\text{Rep}(G)$ naturally form 2-categories (see \cite[Section 4]{DGNO}). Equivariantization extends to a 2-functor from $G$-fusion categories to fusion categories over $\text{Rep}(G)$. This 2-functor has an explicit inverse called the \textit{de-equivariantization functor}.

Given a fusion category over $\text{Rep}(G)$, then identifying $\text{Rep}(G)$ with its image under $F$, the etale algebra $\text{Fun}(G)\in \text{Rep}(G)$ can be viewed as a central algebra in $\mathcal{C}$. Thus the quotient category of right $\text{Fun}(G)$ modules $\mathcal{C}_{\text{Fun}(G)}$ is a fusion category called the \textit{de-equivariantization} of $\mathcal{C}$, which we also denote by $\mathcal{C}_{G}$. The free module functor $\mathcal{C}\rightarrow \mathcal{C}_{\text{Fun}(G)}=\cC_{G}$ is monoidal. Furthermore, one can equip the dequivariantization with a natural categorical action of $G$, and in fact this extends to a 2-functor from the 2-category of fusion categories over $\text{Rep}(G)$ to the 2-category of $G$-fusion categories. The fundamental theorem of $G$-actions on fusion categories states that equivariantization and de-equivariantization are mutually inverse 2-functors \cite[Theorem 4.4]{DGNO}.

\section{Sequential Equivariantization}\label{SeqEq}

Let $G$ be a group acting on a monoidal category $\cC$ and let $H\le G$ be a normal subgroup. First pick a section $\iota:G/H\rightarrow G$ of the canonical projection $\pi: G\rightarrow G/H$ and  define $\alpha: G/H \times G/H \rightarrow H$ by $\alpha(a,b):=\iota(a)\iota(b)\iota(ab)^{-1}$ for all $a,b\in G$.

Using the bijection
\begin{align*}
    H\times G/H \to  G\\
    (h,a)\mapsto h\iota(a).
\end{align*}
the multiplication on $G$ transports to \[(h,a)(k,b)=(h\iota(a)k\iota(a)^{-1}\alpha(a,b), ab).\]

Given $(x,\rho)\in \cC^{H}$, and $a\in G/H$, we define 
$a_{*}(x,\rho):=(x^{\prime},\rho^{\prime}) $ where
\[x^{\prime}:=\iota(a)_{*}(x)\]
and  $\rho^{\prime}_{h}$ is defined via the composition

\[\begin{tikzcd}
   h_{*}(\iota(a)_{*}(x))\arrow{r}{\textbf{can}} & \iota(a)(\iota(a)^{-1}h\iota(a)_{*}(x))\arrow{r} {\iota(a)_{*}(\rho_{h})} & \iota(a)_{*}(x).
 \end{tikzcd}\]
 
 It is straightforward to check for each $a\in G/H$ this is a monoidal functor, using the tensorator for $\iota(a)_{*}$.
 
 To obtain a categorical action, we need monoidal natural isomorphisms $a_{*}\circ b_{*}\rightarrow (ab)_{*}$. Recall that $\iota(a)\iota(b)=\iota(ab)(\iota(ab)^{-1}\alpha(a,b)\iota(ab))$, and thus we define $\nu^{(x,\rho)}_{a,b}$ to be the morphism  making the following diagram commute
 
 \[\begin{tikzcd}
  \iota(a)_{*}(\iota(b)_{*}(x))\arrow{r}{\textbf{can}}\arrow[swap]{dr}{\nu^{(x,\rho)}_{a,b}}& \iota(ab)_{*}((\iota(ab)^{-1}\alpha(a,b)\iota(ab))_{*}(x))\arrow{d}{\iota(ab)_{*}(\rho_{\iota(ab)^{-1}\iota(a)\iota(b)})}\\
   & \iota(ab)_{*}(x)
 \end{tikzcd}\]
 
 \noindent which is an equivariant morphism. $\nu_{a,b}: a_{*}\circ b_{*}\rightarrow ab_{*}$ assembles into a monoidal natural isomorphism. This gives us a categorical action $\underline{G/H}\rightarrow \underline{\text{Aut}}(\cC^{H})$.
 
It is not difficult to see there is a canonical monoidal equivalence $(\mathcal{C}^{H})^{G/H}\cong \mathcal{C}^{G}$ (for example \cite[Equation 75]{DGNO}). One way to see this is using the strictification result of \cite{Galindo-coherencia}, which implies it suffices to consider the case where the $G$ action on $\mathcal{C}$ is strict. Here it is very easy to build this equivalence by simply rearranging the data of equivariant objects.


\section{The Parametrization}\label{main results}

In this section we parameterize \textit{fusion subcategories} of an equivariantization of a fusion category by a group action. If $\cC$ is a fusion category, by a fusion subcategory we mean a full, replete monoidal subcategory (which itself is necessarily a fusion category). These correspond bijectively to unital based subrings of the fusion ring of $\cC$.

Recall that the trivial categorical action $\text{Tr}$ of a group $G$ on a monoidal category $\cC$ sends each group element to the identity monoidal functor $\text{Id}_{\cC}$, and all structure maps are identities as well.  If we have a categorical action $\alpha:\underline{G}\rightarrow \underline{\text{Aut}}(\cC)$, a \textit{trivialization} is a monoidal natural isomorphism $\alpha\cong \text{Tr}$. Unpacking this, a trivialization consists of 

\begin{itemize}
    \item 
    For each $g\in G$, a monoidal natural isomorphism $\eta_{g}: g_{*}\cong \text{Id}_{\cC}$.
    \item
    These $\{\eta_{g}\}_{g\in G}$ must satisfy the equation
\begin{equation}\label{eq: trivialization equ}
\eta^{x}_{g}\alpha_{g}(\eta^{x}_{h})= \eta^{x}_{gh}\mu^{x}_{g,h}.
\end{equation}

\end{itemize}

Recall if $\mathcal{E}$ is a full, replete subcategory of $\cC$ we say $\mathcal{E}$ is \textit{invariant} under the $G$-action if  $\alpha_{g}$ restricts to an autoequivalence of $\mathcal{E}$.

\begin{definition}
Let $\mathcal{E}\le \cC$ be  a $G$-invariant fusion subcategory, and let $H\le G$ be a normal subgroup. We say a trivialization $\eta$ of $ \alpha|_{H}$ on the category $\mathcal{E}$ is $G$-equivariant if the following diagram commutes:

\begin{equation}\label{diag: condition equivariant lifting}
\begin{tikzcd}
   h_{*}(g_{*}(x))\arrow{r}{\textbf{can}}\arrow[swap]{dr}{\eta^{g_{*}(x)}_{h}} & g_{*}(g^{-1}hg_{*}(x))\arrow{d}{g_{*}(\eta^{x}_{g^{-1}hg}(x))} \\
     & g_{*}(x)
 \end{tikzcd}    
\end{equation}

where $\textbf{can}$ is the canonical morphism arising from coherence.

\end{definition}

Writing $\textbf{can}$ out explicitly gives the equation:

$$g(\eta^{x}_{g^-1hg}\mu^{x}_{g^{-1},hg}\mu^{x}_{h,g})(\mu^{h(g(x))}_{g,g^{-1}})^{-1}=\eta^{g_{*}(x)}_{h}. $$


\begin{definition}

Given a group $G$ acting on a monoidal category $\mathcal{C}$, a \textit{lifting} of a fusion subcategory $\mathcal{E}\le \mathcal{C}$ is a choice, for every $x\in \mathcal{E}$ of equivariant structure $(x, \rho^{x})$ such that

\begin{enumerate}
\item \label{item: lifting1}
$(x,\rho^{x})\otimes_{\mathcal{C}^{G}} (y,\rho^{y})=(x\otimes y, \rho^{x\otimes y})$

\item \label{item: lifting 2}
$\text{Hom}_{\mathcal{C}^{G}}((x,\rho^{x}),(y,\rho^{y}))=\text{Hom}_{\mathcal{C}}(x,y)$.

\end{enumerate}

We denote the image of $\mathcal{E}$ in $\mathcal{C}^{G}$ by $\mathcal{E}^{\rho}\le \mathcal{C}^{G}$.

\end{definition}

\begin{lemma}\label{lemma: lifting=trivialization}
The data of a lifting is precisely the data of a trivialization of the $G$-action on $\mathcal{E}$.
\end{lemma}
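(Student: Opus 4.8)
The plan is to observe that a lifting and a trivialization are literally the same data written in two different languages, and to make the translation explicit by setting $\eta^{x}_{g} := \rho^{x}_{g}$ for every $x\in\mathcal{E}$ and $g\in G$. Since $\rho^{x}_{g}\colon g_{*}(x)\to x$ and the $g$-component of a monoidal natural isomorphism $\eta_{g}\colon g_{*}\to\mathrm{Id}_{\cC}$ at $x$ is also a map $g_{*}(x)\to x$, this is a well-formed identification of families of morphisms, and each is an isomorphism on one side exactly when it is on the other. No construction beyond this identification is needed; the entire content is to check that each defining axiom on one side matches a defining axiom on the other.

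First I would run the dictionary in the direction from liftings to trivializations. The cocycle relation $\rho^{x}_{g}\,g_{*}(\rho^{x}_{h})=\rho^{x}_{gh}\,\mu^{x}_{g,h}$ built into the equivariant structure $\rho^{x}$ is, after substituting $\eta=\rho$ and using $\alpha_{g}=g_{*}$, precisely the coherence equation \eqref{eq: trivialization equ}. The equivariance condition $f\rho^{x}_{g}=\rho^{y}_{g}\,g_{*}(f)$ that a morphism $f\in\cC(x,y)$ must satisfy to lie in $\mathrm{Hom}_{\cC^{G}}$ is exactly the naturality square for $\eta_{g}$ at $f$, so imposing it for \emph{all} $f$ is lifting condition \eqref{item: lifting 2}. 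Finally, reading off the tensor product formula $(x,\rho^{x})\ot_{\cC^{G}}(y,\rho^{y})=(x\ot y,(\rho^{x}\ot\rho^{y})\circ\beta^{x,y})$ and comparing it to $\rho^{x\ot y}$, lifting condition \eqref{item: lifting1} becomes $\eta^{x\ot y}_{g}=(\eta^{x}_{g}\ot\eta^{y}_{g})\circ\beta^{x,y}_{g}$, which is the monoidality axiom for $\eta_{g}$ as a monoidal natural transformation $(g_{*},\beta_{g})\to(\mathrm{Id},\mathrm{id})$. Thus $\{\eta_{g}\}$ is a trivialization.

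The reverse direction is the same three identifications read backwards: starting from a trivialization $\{\eta_{g}\}$ and setting $\rho^{x}_{g}:=\eta^{x}_{g}$, the coherence equation gives that each $(x,\rho^{x})$ is an equivariant object, monoidality gives \eqref{item: lifting1}, and naturality gives \eqref{item: lifting 2}. Because both assignments are the literal identity $\eta\leftrightarrow\rho$ on underlying morphisms, they are mutually inverse, which yields the claimed bijection. The only step where I would slow down is the monoidality matching: one must correctly track the direction of the tensorator, identifying $\beta^{x,y}_{g}\colon g_{*}(x\ot y)\cong g_{*}(x)\ot g_{*}(y)$ with the inverse of the lax structure morphism of $g_{*}$, so that the monoidal-natural-transformation axiom indeed reproduces the $\cC^{G}$ tensor product. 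The naturality and coherence identifications are immediate once the equations are written out, so the tensorator bookkeeping is the only genuine obstacle, and it is a routine one.
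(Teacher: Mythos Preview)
Your proposal is correct and follows the same approach as the paper: both set $\eta^{x}_{g}:=\rho^{x}_{g}$ and verify that the axioms on each side correspond. Your version is in fact more careful than the paper's, which is quite terse; you explicitly match the equivariant-object cocycle condition with equation~\eqref{eq: trivialization equ}, lifting condition~\eqref{item: lifting1} with monoidality of $\eta_{g}$, and lifting condition~\eqref{item: lifting 2} with naturality of $\eta_{g}$, whereas the paper's proof compresses (and arguably mislabels) this three-way correspondence.
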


\begin{proof}

Given a lifting of $\mathcal{E}$ we can define a trivialization of the $G$ action on $\mathcal{E}$ as follows. Set $\eta^{x}_{g}: g(x)\cong x$ by $\eta^{x}_{g}:=\rho^{x}_{g}$. Equation \eqref{eq: trivialization equ} is  \eqref{item: lifting1} in the definition of lifting and \eqref{item: lifting 2} is equivalent to the naturality of $\eta_g$ for every $g$.

Conversely, given a trivialization of $G$ on $\mathcal{E}$, then $(x, \eta^{x}_{g})$ defines an equivariant structure. Since the $\eta_{g}$ are natural, we see $$\text{Hom}_{\mathcal{C}^{G}}((x,\rho^{x}),(y,\rho^{y}))=\text{Hom}_{\mathcal{C}}(x,y).$$

\end{proof}

\begin{lemma}\label{lemma: equivariant liftings} Given a $G$ action on $\mathcal{C}$, $\mathcal{E}\le \mathcal{C}$ be a fusion subcategory, and a lifting $\rho$ of $\mathcal{E}$ to $\mathcal{C}^{H}$ for some normal subgroup $H\le G$, then the corresponding trivialization of the $H$ action on $\mathcal{E}$ is $G$-equivariant if and only if $\mathcal{E}^{\rho}\le \mathcal{C}^{H}$ is invariant under the $G$ action.
\end{lemma}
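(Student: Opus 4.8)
The plan is to unwind both conditions into explicit identities among the isomorphisms $\eta^x_h = \rho^x_h$ and compare them directly. Recall from Section~\ref{SeqEq} that the induced $G/H$-action on $\mathcal{C}^H$ sends $(x,\rho^x)$ to $a_*(x,\rho^x) = (\iota(a)_*(x), (\rho^x)')$, where $(\rho^x)'_h = \iota(a)_*\big(\rho^x_{\iota(a)^{-1}h\iota(a)}\big)\circ\mathbf{can}$. First I would observe that, reading the $G$-equivariance diagram \eqref{diag: condition equivariant lifting} with $g=\iota(a)$ and using $\eta^x=\rho^x$, its commutativity is literally the equation $\rho^{\iota(a)_*(x)}_h = (\rho^x)'_h$. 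Thus the $G$-equivariance condition, restricted to the section representatives, says exactly that $a_*(x,\rho^x)$ equals the lifted object $(\iota(a)_*(x), \rho^{\iota(a)_*(x)})$ on the nose.

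For the forward direction (equivariance $\Rightarrow$ invariance) this is immediate: since $\mathcal{E}$ is $G$-invariant we have $\iota(a)_*(x)\in\mathcal{E}$, so $a_*(x,\rho^x) = (\iota(a)_*(x),\rho^{\iota(a)_*(x)})\in\mathcal{E}^\rho$, and as $a_*$ is an autoequivalence of $\mathcal{C}^H$ and $\mathcal{E}^\rho$ has finitely many isomorphism classes, each $a_*$ restricts to an autoequivalence of $\mathcal{E}^\rho$. For the converse (invariance $\Rightarrow$ equivariance), invariance only gives that $a_*(x,\rho^x)=(\iota(a)_*(x),(\rho^x)')$ is \emph{isomorphic} in $\mathcal{C}^H$ to some lifted object $(z,\rho^z)$; here I would invoke condition \eqref{item: lifting 2} of the lifting: the isomorphism $f\colon \iota(a)_*(x)\to z$ underlying this $\mathcal{C}^H$-isomorphism is a $\mathcal{C}$-morphism between objects of $\mathcal{E}$, hence \emph{also} intertwines $\rho^{\iota(a)_*(x)}$ with $\rho^z$. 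Comparing the two intertwining relations and cancelling the invertible $f$ forces $(\rho^x)'_h = \rho^{\iota(a)_*(x)}_h$, which is precisely the equivariance diagram for $g=\iota(a)$. This rigidity coming from \eqref{item: lifting 2} is the conceptual crux of the converse.

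It remains to promote the equivariance identity from the representatives $\iota(a)$ to arbitrary $g\in G$. Let $S\subseteq G$ be the set of elements for which diagram \eqref{diag: condition equivariant lifting} commutes for all $h\in H$, $x\in\mathcal{E}$. I would check two closure properties: first, $H\subseteq S$ always, since for $h_0\in H$ the required identity $\eta^{(h_0)_*(x)}_h = (\eta^x_{h_0})^{-1}\eta^x_h\,h_*(\eta^x_{h_0})$ is exactly naturality of $\eta_h$ at the morphism $\eta^x_{h_0}\colon (h_0)_*(x)\to x$, after using the trivialization equation \eqref{eq: trivialization equ} to rewrite $\mathbf{can}$; second, $S$ is closed under multiplication, obtained by applying the condition for $g_1$ to the object $(g_2)_*(x)$, then the condition for $g_2$ to $x$ with the element $g_1^{-1}hg_1$, and identifying the resulting composite of canonical maps with $\mathbf{can}(g_1g_2,h,x)$ via $\mu_{g_1,g_2}$. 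Since a submonoid of the finite group $G$ is a subgroup and $G$ is generated by $H$ together with $\{\iota(a)\}_{a\in G/H}$, we conclude $S=G$ as soon as every $\iota(a)\in S$; that is, full $G$-equivariance follows from its representative version.

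The main obstacle is the coherence bookkeeping in this last step: verifying that the composite $\iota(a)_*(\mathbf{can})\circ\mathbf{can}$ of canonical isomorphisms, conjugated by the monoidal structure maps $\mu$, collapses to the single $\mathbf{can}(g_1g_2,h,x)$, and dually that $\mathbf{can}$ in the $g\in H$ case equals $(\mu^x_{h_0,h_0^{-1}hh_0})^{-1}\mu^x_{h,h_0}$. These are routine consequences of the coherence axioms for the monoidal functor $\underline{G}\to\underline{\mathrm{Aut}}(\mathcal{C})$, but they require care to keep the many $\mu$'s and canonical maps aligned; everything else is a direct translation between the two definitions.
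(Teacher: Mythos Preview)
Your argument is correct and matches the paper's: both identify the action of $g$ on $(x,\rho^x)$ as $(g_*(x),\rho')$ with $\rho'_h$ equal to the right-hand composite in diagram~\eqref{diag: condition equivariant lifting}, so invariance of $\mathcal{E}^\rho$ is exactly the equation $\rho'=\rho^{g_*(x)}$, i.e., the commutativity of~\eqref{diag: condition equivariant lifting}. The paper carries this out for arbitrary $g\in G$ directly---the recipe in Section~\ref{SeqEq} for $g_*(x,\rho)$ makes sense for any $g$, not only section representatives---so your final promotion step (showing $H\subseteq S$ and that $S$ is multiplicatively closed) is unnecessary; on the other hand, your explicit appeal to lifting condition~\eqref{item: lifting 2} in the converse, forcing $(\rho^x)'=\rho^{g_*(x)}$ from mere isomorphism, is a detail the paper's one-line proof leaves implicit.
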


\begin{proof}
Let $\rho$ be a lifting of $\mathcal{E}$ to $\cC^H$, and let $\{\eta_{h}\}_{h\in H}$ be the trivialization defined by $\eta^{x}_{h}:=\rho^{x}_{h}$. If $g\in G$  then $g_*(x,\rho)=(x',\rho')$ where $x'=g_*(x)$ and 

\[\begin{tikzcd}
   h_{*}(g_{*}(x))\arrow{r}{\textbf{can}} & g(g^{-1}hg_{*}(x))\arrow{r} {g_{*}(\rho_{h})} & g_{*}(x).
 \end{tikzcd}\]
 
Hence $\mathcal{E}^\rho\le \cC^H$ is $G$-invariant if and only if $\rho'=\rho_{g_*(x)}$, and that is exactly the commutativity of diagram \eqref{diag: condition equivariant lifting}. 
\end{proof}

We record the following, defined in \cite{MR3943750}:

\begin{definition}
If $\mathcal{C}$ is a fusion category, $A\in \cC$ is a connected separable algebra, and $\mathcal{E}\le \mathcal{C}$ is fusion subcategory, we say $\mathcal{E}$ is \textit{transversal} to $A$ if  $A \cap \mathcal{E}=\mathbbm {1}$. 
\end{definition}

It's easy to see that transversality is equivalent to requiring $$\text{Hom}_{\mathcal{C}}(x, \mathbbm{A})\cong\text{Hom}_{\mathcal{C}}(x,\mathbbm{1})$$ for all $x\in \mathcal{E}$. We denote by $\mathcal{C}_{A}$ the semi-simple category of right $A$ modules in $\mathcal{C}$. We have the following lemma:

\begin{lemma}\label{transversallemma} Let $\mathcal{C}$ be fusion, $\mathcal{E}\le \mathcal{C}$ a fusion subcategory, and $A\in \mathcal{C}$ a connected seperable algebra. Then $\mathcal{E}$ is transversal to $A$ if and only if the free module functor $F_{A}: \mathcal{E}\rightarrow \mathcal{C}_{A}$, $F_{A}(x):=x\otimes A$, is fully faithful.
\end{lemma}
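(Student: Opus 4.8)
The plan is to show both directions by directly computing the Hom spaces in the module category $\cC_A$ using the standard Frobenius-reciprocity adjunction for the free module functor. The key tool is the adjunction between the free module functor $F_A$ and the forgetful functor $U_A:\cC_A\to\cC$, which gives, for $x,y\in\cC$,
\begin{equation*}
\operatorname{Hom}_{\cC_A}(F_A(x),F_A(y))\cong\operatorname{Hom}_{\cC}(x,U_A F_A(y))=\operatorname{Hom}_{\cC}(x,y\otimes A).
\end{equation*}
Since $A$ is connected and separable, it is in particular a Frobenius algebra, so this adjunction is well-behaved and $U_A F_A(y)=y\ot A$ on objects. This reduces the whole statement to comparing $\operatorname{Hom}_{\cC}(x,y\ot A)$ with $\operatorname{Hom}_{\cC}(x,y)$ for $x,y\in\mathcal{E}$.

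First I would establish the backward-type reformulation: $F_A|_{\mathcal{E}}$ is fully faithful precisely when the adjunction map $\operatorname{Hom}_{\cC}(x,y)\to\operatorname{Hom}_{\cC}(x,y\ot A)$ (induced by the unit $\mathbbm{1}\to A$ of the algebra, tensored with $y$) is an isomorphism for all $x,y\in\mathcal{E}$. The map $F_A$ on morphisms is exactly this adjunction unit composed appropriately, so full faithfulness of $F_A$ is equivalent to this family of isomorphisms. Next I would connect this to transversality. Using rigidity in $\mathcal{E}$ (a fusion subcategory, hence itself rigid) one can dualize: $\operatorname{Hom}_{\cC}(x,y\ot A)\cong\operatorname{Hom}_{\cC}(y^{*}\ot x,A)$ and $\operatorname{Hom}_{\cC}(x,y)\cong\operatorname{Hom}_{\cC}(y^{*}\ot x,\mathbbm{1})$. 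As $x,y$ range over $\mathcal{E}$, the object $z:=y^{*}\ot x$ ranges exactly over all objects of $\mathcal{E}$ (since $\mathcal{E}$ is closed under tensor and duals). Thus the family of isomorphisms is equivalent to the condition $\operatorname{Hom}_{\cC}(z,A)\cong\operatorname{Hom}_{\cC}(z,\mathbbm{1})$ for all $z\in\mathcal{E}$, which is precisely the restatement of transversality recorded just before the lemma.

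The remaining point, which I expect to be the main technical obstacle, is verifying that the isomorphism identifying $F_A$ on morphisms with the adjunction-unit map is natural and genuinely compatible with the transversality condition, rather than just a dimension count. Concretely, one must check that the map $\operatorname{Hom}_{\cC}(z,\mathbbm{1})\to\operatorname{Hom}_{\cC}(z,A)$ appearing in transversality is induced by the same algebra unit $\mathbbm{1}\to A$ as the map coming from $F_A$, so that ``being an isomorphism'' means the same thing on both sides. Since $A$ is connected, $\operatorname{Hom}_{\cC}(\mathbbm{1},A)$ is one-dimensional and spanned by the unit, so the composite $\operatorname{Hom}_{\cC}(z,\mathbbm{1})\hookrightarrow\operatorname{Hom}_{\cC}(z,A)$ is always injective; hence both transversality and full faithfulness reduce to the \emph{surjectivity} (equivalently, the equality of dimensions $\dim\operatorname{Hom}_{\cC}(z,A)=\dim\operatorname{Hom}_{\cC}(z,\mathbbm{1})$) of this same canonical map for every $z\in\mathcal{E}$. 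Once this identification of maps is pinned down, the equivalence is immediate and the two directions follow simultaneously.
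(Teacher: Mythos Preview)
Your proposal is correct and follows essentially the same approach as the paper: both use the free--forgetful adjunction $\operatorname{Hom}_{\cC_A}(F_A(x),F_A(y))\cong\operatorname{Hom}_{\cC}(x,y\ot A)$, then dualize via rigidity to $\operatorname{Hom}_{\cC}(y^*\ot x,A)$ and compare with $\operatorname{Hom}_{\cC}(y^*\ot x,\mathbbm{1})$ using transversality. Your extra care in identifying the specific map (induced by the unit $\mathbbm{1}\to A$) rather than relying on a dimension count is a worthwhile clarification that the paper handles only implicitly with the phrase ``Thus the inclusion $F_A\ldots$ is an isomorphism.''
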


\begin{proof}
Let $x,y\in \mathcal{E}$. Suppose $\mathcal{E}$ is transversal to $A$. Recall that the left adjoint of $F_{A}$ is simply the forgetful functor, i.e. the functor that forgets the right $A$-module structure on an object. Then $\text{Hom}_{\mathcal{C}_{A}}(F_{A}(x), F_{A}(y))\cong \text{Hom}_{\mathcal{C}}(x, F_{A}(y))\cong \text{Hom}_{\cC}(y^{*}\otimes x, A)\cong \text{Hom}_{\cC}(y^{*}\otimes x, \mathbbm{1})\cong \text{Hom}_{\cC}( x, y)$. Thus the inclusion $$F_{A}: \text{Hom}_{\cC}( x, y)\rightarrow \text{Hom}_{\mathcal{C}_{A}}(F_{A}(x), F_{A}(y)) $$ is an isomorphism, hence $F_{A}$ is fully faithful.

Conversely, if $F_{A}$ is fully faithful on $\mathcal{E}$, then $$\text{Hom}_{\mathcal{C}}(x, \mathbbm{1})\cong \text{Hom}_{\mathcal{C}_{A}}(F_{A}(x), F_{A}(\mathbbm{1}))
\cong \text{Hom}_{\mathcal{C}}(x, A),$$ so $\mathcal{E}$ is transversal to $A$.
\end{proof}

\begin{lemma}\label{lemma: equivalentsubcategorycharacterization}
Let $\cC$ be a fusion category with $G$ action. There is a bijection between the following:

\begin{enumerate}
    \item     Fusion subcategories $\mathcal{D}\le \cC$ with trivializations of the $G$ action on $\mathcal{D}$.
    \item
    Fusion subcategories $\mathcal{E}\le \cC^{G}$ such that $F_{G}|_{\mathcal{E}}$ is fully faithful.
    \item
    Fusion subcategories of $\cC^{G}$ which are transversal to the algebra $\text{Fun}(G)\in \operatorname{Rep}(G)\le \cC^{G}$.
\end{enumerate}
\end{lemma}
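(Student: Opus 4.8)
The plan is to prove the three-fold bijection by establishing $(2)\Leftrightarrow(3)$ and $(1)\Leftrightarrow(2)$ separately; transitivity then gives the full statement. The equivalence $(2)\Leftrightarrow(3)$ is essentially immediate. By definition the de-equivariantization $\cC_{G}=(\cC^{G})_{\text{Fun}(G)}$ is the category of right $\text{Fun}(G)$-modules in $\cC^{G}$, and $F_{G}$ is the free module functor for the connected separable algebra $A=\text{Fun}(G)\in\operatorname{Rep}(G)\le\cC^{G}$. Applying Lemma \ref{transversallemma} in the ambient category $\cC^{G}$ with this $A$ shows that, for a fusion subcategory $\mathcal{E}\le\cC^{G}$, the restriction $F_{G}|_{\mathcal{E}}$ is fully faithful if and only if $\mathcal{E}$ is transversal to $\text{Fun}(G)$, which is exactly the content of $(2)\Leftrightarrow(3)$.

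For $(1)\Leftrightarrow(2)$ I would first reduce the fully faithfulness of $F_{G}$ to that of the forgetful functor $U:\cC^{G}\to\cC$, $U(x,\rho)=x$. Under the canonical equivalence $\cC_{G}\simeq\cC$ supplied by the de-equivariantization correspondence recalled above, the composite $\cC^{G}\xrightarrow{F_{G}}\cC_{G}\simeq\cC$ is naturally isomorphic to $U$. Concretely this is visible on morphisms through the adjunction chain $\text{Hom}_{\cC_{G}}(F_{G}X,F_{G}Y)\cong\text{Hom}_{\cC^{G}}(X,Y\otimes\text{Fun}(G))\cong\text{Hom}_{\cC}(UX,UY)$, where the first isomorphism is the free–forgetful adjunction and the second uses that $Y\otimes\text{Fun}(G)$ is the induction of $UY$ together with the induction–restriction adjunction; one checks this isomorphism intertwines the map induced by $F_{G}$ with the map induced by $U$. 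Hence $F_{G}|_{\mathcal{E}}$ is fully faithful precisely when $U|_{\mathcal{E}}$ is, and since $U$ is always faithful the real condition is fullness: every morphism in $\cC$ between underlying objects of $\mathcal{E}$ lifts to an equivariant morphism.

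With this reduction in hand I would build $(1)\Leftrightarrow(2)$ through Lemma \ref{lemma: lifting=trivialization}, which already matches trivializations of the $G$-action with liftings. Given $(\mathcal{D},\eta)$ from $(1)$, the lemma produces a lifting $\rho$ and the subcategory $\mathcal{D}^{\rho}\le\cC^{G}$, and condition \eqref{item: lifting 2} in the definition of lifting says exactly that $U|_{\mathcal{D}^{\rho}}$ is fully faithful, placing $\mathcal{D}^{\rho}$ in $(2)$. Conversely, starting from $\mathcal{E}$ in $(2)$, full faithfulness of $U|_{\mathcal{E}}$ forces a \emph{unique} equivariant structure on each underlying object: if $(x,\rho)$ and $(x,\rho')$ both lie in $\mathcal{E}$, fullness lifts $\text{id}_{x}$ to an equivariant morphism $(x,\rho)\to(x,\rho')$, which by faithfulness equals $\text{id}_{x}$, forcing $\rho=\rho'$. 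Setting $\mathcal{D}:=U(\mathcal{E})$ and letting $\rho^{x}$ be the unique structure with $(x,\rho^{x})\in\mathcal{E}$ then yields a lifting: property \eqref{item: lifting1} holds because $\mathcal{E}$ is closed under $\otimes$, so the tensor product of two lifts has underlying object $x\otimes y$ and must equal $\rho^{x\otimes y}$ by uniqueness. Lemma \ref{lemma: lifting=trivialization} converts this into a trivialization, and the two assignments are mutually inverse: $\mathcal{D}^{\rho}\mapsto\mathcal{D}$ with the recovered structures reproduces $(\mathcal{D},\eta)$, while $\mathcal{E}\mapsto(\mathcal{D},\rho)\mapsto\mathcal{D}^{\rho}=\mathcal{E}$ again by the uniqueness just established.

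The hard part will be verifying that $\mathcal{D}=U(\mathcal{E})$ is genuinely a fusion subcategory of $\cC$ rather than merely a tensor-closed class of objects. Closure under $\otimes$, duals, and direct sums follows from monoidality and faithfulness of $U$ (duality being automatic since a trivialization, as a monoidal natural isomorphism, is compatible with dual structure), but closure under subobjects requires real work: given a subobject $x'\le U(X)$ with $X=(x,\rho^{x})\in\mathcal{E}$, I would lift the corresponding idempotent $e\colon x\to x$ to an equivariant idempotent on $X$ via fullness, take its image in $\cC^{G}$, note that this image lies in $\mathcal{E}$ because $\mathcal{E}$ is closed under subobjects, and check that its underlying object is $x'$, so $x'\in\mathcal{D}$. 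Granting this idempotent-lifting argument and the identification $F_{G}\cong U$, everything else is the routine bookkeeping of monoidal natural isomorphisms.
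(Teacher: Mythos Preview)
Your proof is correct and follows essentially the same route as the paper: both obtain $(2)\Leftrightarrow(3)$ directly from Lemma~\ref{transversallemma} applied to $A=\text{Fun}(G)$, and both obtain $(1)\Leftrightarrow(2)$ by invoking Lemma~\ref{lemma: lifting=trivialization} together with the observation that full faithfulness of the forgetful functor forces the equivariant structure on each underlying object to be unique. You are simply more explicit than the paper on two points it leaves implicit---the identification of $F_{G}$ with the forgetful functor $U$ via $\cC_{G}\simeq\cC$, and the verification (via idempotent lifting) that $U(\mathcal{E})$ is closed under subobjects---but these are refinements of, not departures from, the paper's argument.
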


\begin{proof}

First we establish a bijection between items $(1)$ and $(2)$. Given $\mathcal{D}\le \cC$ and a trivialization $\eta$, this uniquely defines a lifting $\rho$ by Lemma \ref{lemma: lifting=trivialization}. Then we consider the full subcategory $\mathcal{E}:=\mathcal{D}^{\rho}\le \mathcal{C}$, spanned by objects $\{(x, \rho^{x})\}_{x\in \mathcal{D}}$. By the definition of lifting, this is a full monodial catgeory of $\cC^{G}$, such that the forgetful functor is fully faithful. 

In the other direction, suppose we have a $\mathcal{E}\le \cC^{G}$ such that the forgetful functor is fully faithful. Define $\mathcal{D}$ to be the image of $\mathcal{E}$ under the forgetful functor. Since the forgetful functor is fully faithful we have $\text{Hom}_{\mathcal{E}}((x,\eta^x),(y,\eta^y))=\text{Hom}_\cC(x,y)$, in particular, that implies that if $(x,\eta^x), (x,\lambda^x)\in \mathcal{E}$, then $\eta^x=\lambda^x$. Since $\mathcal{E}$ is monoidal, the uniqueness of $\eta^x$ for all $x\in \cD$ implies  \eqref{eq: trivialization equ}. Now for each object $x\in D$, by definition (and repleteness) there exists a unique pre-image $(x, \eta^{x})\in \mathcal{E}$. The isomorphisms $\{\eta^x\}_{x\in \mathcal{D}}$ define a lifting of $\mathcal{D}$.

The equivalence of items $(2)$ and $(3)$ follows from Lemma \ref{transversallemma}.

\end{proof}

\begin{theorem}\label{MainTheorem}
Let $\cC$ be a fusion category with $G$ action. Fusion subcategories of $\mathcal{C}^{G}$ are parameterized by triples $(\mathcal{E}, H, \eta)$ where

\begin{enumerate}
    \item 
    $\mathcal{E}\le \mathcal{C}$ is a $G$-invariant fusion subcategory.
    \item
    $H\le G$ is a normal subgroup.
    \item
    $\eta$ is a $G$-equivariant trivialization of $H$ on $\mathcal{E}$.
\end{enumerate}
    
\end{theorem}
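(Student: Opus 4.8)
The overall plan is to reduce to the transversal case of Lemma~\ref{lemma: equivalentsubcategorycharacterization} via the sequential equivariantization $\cC^{G}\cong(\cC^{H})^{G/H}$ of Section~\ref{SeqEq}. Given a fusion subcategory $\mathcal{S}\le \cC^{G}$, I would first produce the normal subgroup. The intersection $\mathcal{S}\cap\Rep(G)$ is a fusion subcategory of $\Rep(G)$; since fusion subcategories of $\Rep(G)$ are exactly the Tannakian subcategories $\Rep(G/H)$ indexed by normal subgroups $H\trianglelefteq G$, there is a unique normal $H$ with $\mathcal{S}\cap\Rep(G)=\Rep(G/H)$. Under the equivalence $\cC^{G}\cong(\cC^{H})^{G/H}$ this $\Rep(G/H)$ is the canonical copy sitting inside the $G/H$-equivariantization, so $\mathcal{S}$ contains $\Rep(G/H)$.

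Next I would invoke a Galois correspondence, which I would derive from the fact that equivariantization and de-equivariantization are mutually inverse $2$-functors (\cite[Theorem 4.4]{DGNO}): for any group $K$ acting on a fusion category $\mathcal{B}$, de-equivariantization by $\mathrm{Fun}(K)$ gives a bijection between $K$-invariant fusion subcategories $\mathcal{D}\le\mathcal{B}$ and fusion subcategories of $\mathcal{B}^{K}$ containing $\Rep(K)$, via $\mathcal{D}\mapsto\mathcal{D}^{K}$. With $\mathcal{B}=\cC^{H}$ and $K=G/H$, this yields a $G/H$-invariant fusion subcategory $\mathcal{D}=F_{G/H}(\mathcal{S})\le\cC^{H}$ with $\mathcal{S}=\mathcal{D}^{G/H}$. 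I would then show $\mathcal{D}\cap\Rep(H)$ is trivial: using that equivariantization preserves intersections of invariant subcategories, a nontrivial $H$-irrep in $\mathcal{D}$ would equivariantize to an irreducible $G$-representation nontrivial on $H$ lying in $\mathcal{S}\cap\Rep(G)=\Rep(G/H)$, which is impossible. By Lemma~\ref{transversallemma} this says exactly that $\mathcal{D}$ is transversal to $\mathrm{Fun}(H)$.

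Now Lemma~\ref{lemma: equivalentsubcategorycharacterization}, applied to the $H$-action, identifies such a $\mathcal{D}$ with a fusion subcategory $\mathcal{E}\le\cC$ equipped with a trivialization $\eta$ of the $H$-action, so that $\mathcal{D}=\mathcal{E}^{\rho}$ for the lifting $\rho$ of Lemma~\ref{lemma: lifting=trivialization}. Because $\mathcal{D}=\mathcal{E}^{\rho}$ is $G/H$-invariant, Lemma~\ref{lemma: equivariant liftings} shows $\eta$ is $G$-equivariant, and invariance of $\mathcal{E}^{\rho}$ together with the $H$-invariance of $\mathcal{E}$ implicit in admitting a trivialization forces $\mathcal{E}$ to be $G$-invariant; this produces the triple $(\mathcal{E},H,\eta)$. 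For the inverse assignment I would run this backward: a triple gives a lifting $\rho$ (Lemma~\ref{lemma: lifting=trivialization}), hence $\mathcal{E}^{\rho}\le\cC^{H}$ transversal to $\mathrm{Fun}(H)$ and, by $G$-equivariance and Lemma~\ref{lemma: equivariant liftings}, $G/H$-invariant; setting $\mathcal{S}:=(\mathcal{E}^{\rho})^{G/H}$ gives $\mathcal{S}\cap\Rep(G)=(\mathcal{E}^{\rho}\cap\Rep(H))^{G/H}=\Rep(G/H)$, and the two passages are checked to be mutually inverse.

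The main obstacle I anticipate is the second step: setting up the Galois correspondence for fusion subcategories containing $\Rep(K)$ and, within it, the compatibility of equivariantization with intersections of invariant subcategories, since this is precisely what guarantees $\mathcal{D}\cap\Rep(H)$ is trivial and thereby connects the construction to the transversal situation of Lemma~\ref{lemma: equivalentsubcategorycharacterization}. The surrounding bookkeeping---keeping the three nested categories $\Rep(G/H)\le\Rep(G)$ and $\Rep(H)$ straight under $\cC^{G}\cong(\cC^{H})^{G/H}$---is routine but needs care.
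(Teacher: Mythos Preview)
Your proposal is correct and follows essentially the same architecture as the paper: extract $H$ from $\mathcal{S}\cap\Rep(G)=\Rep(G/H)$, pass to $\cC^{H}$ via the sequential equivariantization, show the image $\mathcal{D}=F_{G/H}(\mathcal{S})$ is transversal to $\mathrm{Fun}(H)$, and then invoke Lemmas~\ref{lemma: equivalentsubcategorycharacterization} and~\ref{lemma: equivariant liftings}. The only real difference is the transversality step. The paper argues via the correspondence between \'etale algebras in $\mathcal{E}'\cap F_{G/H}(\Rep(G))\le\Rep(H)$ and \'etale algebras over $\mathrm{Fun}(G/H)$ in $\Rep(G)\cap\mathcal{D}\cong\Rep(G/H)$, using maximality of $\mathrm{Fun}(G/H)$; you instead use that equivariantization preserves intersections of invariant subcategories to get $(\mathcal{D}\cap\Rep(H))^{G/H}=\mathcal{S}\cap\Rep(G)=\Rep(G/H)$, hence $\mathcal{D}\cap\Rep(H)$ is trivial. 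Your route is arguably more transparent and, together with your explicit Galois correspondence, makes the ``mutually inverse'' claim cleaner than the paper's one-line assertion; the paper's \'etale-algebra route avoids having to check the intersection compatibility. One small wording fix: equivariantizing a single nontrivial $H$-irrep need not be an \emph{irreducible} $G$-representation, so phrase the contradiction at the level of the subcategory $(\mathcal{D}\cap\Rep(H))^{G/H}$ (e.g.\ via FP-dimension) rather than tracking individual simples.
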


\begin{proof}
Given the above data, let $\rho$ be the lift of $\mathcal{E}$ to $\mathcal{C}^{H}$ defined by the trivialization (Lemma \ref{lemma: lifting=trivialization}). The fusion subcategory $\mathcal{E}^{\rho}\le \mathcal{C}^{H}$ is $G/H$ invariant by Lemma \ref{lemma: equivariant liftings}. Define $\mathcal{D}$ to be the replete image of $(\mathcal{E}^{\rho})^{G/H}$ under the canonical equivalence $(\cC^{H})^{G/H}\cong \cC^{G}$. This gives a subcategory of $\cC^{G}$ from our data.

In the other direction, let $\mathcal{D}\le \mathcal{C}^{G}$ be a fusion subcategory. 

Let $\mathcal{E}\le \cC$ be the full fusion subcategory generated by the image of $\mathcal{D}$ under the forgetful functor $F_{G}: \mathcal{C}^{G}\rightarrow \mathcal{C}$. Clearly $\mathcal{E}$ is $G$-invariant.

Let $H$ be defined by $\text{Rep}(G)\cap \mathcal{D}\cong \text{Rep}(G/H)$. To get the trivialization of the $H$ action on $\mathcal{E}$, $\eta$, 

Note that we have the following diagram of monoidal functors commutes up to canonical natural isomorphism:

\begin{equation}
\begin{tikzcd}
   \cC^{G}\arrow{dr}{F_{G/H}}\arrow[swap]{dd}{F_{G}} &  \\
     & \cC^{H}\arrow{dl}{F_{H}}\\
    \cC & \\
 \end{tikzcd}    
\end{equation}

Thus if we define $\mathcal{E}^{\prime}=F_{G/H}(\mathcal{D})\le \cC^{H}$, then we have $F_{H}(\mathcal{E}^{\prime})=F_{G}(\mathcal{D})=\mathcal{E}$.
Alternatively, using the equivalence $(\mathcal{C}^{G})_{G/H}\cong \mathcal{C}^{H}$, then we see  $\mathcal{E}^{\prime}:=(\cD)_{G/H}\le \cC^{H}$.

We claim that the separable, connected algebra $\text{Fun}(H)\in \cC_{G/H}$ is transversal to $\mathcal{E}^{\prime}$, or in other words, $\text{Fun}(H)\cap \mathcal{E}^{\prime}\cong \mathbbm{1}$. Recall there is a correspondence between etale algebras in $\mathcal{E}^{\prime}\cap F_{G/H}(\Rep(G))\le F_{G/H}(\Rep(G))\cong \Rep(H) $ and etale algebras over $\text{Fun}(G/H)$ in $\Rep(G)\cap \mathcal{D}\cong \text{Rep}(G/H)$. But since $\text{Fun}(G/H)$ is maximal etale in $\Rep(G/H)$ and corresponds to $\mathbbm{1}$ in $\mathcal{E}^{\prime}\cap F_{G/H}(\Rep(G))$  this proves $\text{Fun}(H)$ is transversal to $\mathcal{E}^{\prime}$.

By Lemma \ref{lemma: equivalentsubcategorycharacterization}, this implies $F_{H}$ induces an equivalence from $\mathcal{E}^{\prime}$ to $\mathcal{E}$, and such an equivalence uniquely defines a lifting $\mathcal{E}\rightarrow \mathcal{C}^{H}$ which is $G/H$ invariant (since $\mathcal{E}^{\prime}$ is $G/H$ invariant by construction). This gives a $G$-equivariant trivialization $\eta$ by Lemma \ref{lemma: lifting=trivialization}.

Now, it is easy to see these two constructions are mutually inverse.

\end{proof}

From the construction, we have the following corollary

\begin{corollary}
Let $\mathcal{D}(\mathcal{E}, H, \eta)\le \cC^{G}$ denote the subcategory constructed in the previous theorem. Then

\begin{enumerate}
    \item 
    $\text{FPdim}(\mathcal{D}(\mathcal{E}, H, \eta))=[G:H] \text{FPdim}(\mathcal{E})$.
    \item
    $\mathcal{D}(\mathcal{E}, H, \eta)\le \mathcal{D}(\mathcal{F}, K, \nu)$ if and only if $\mathcal{E}\le \mathcal{F}$, $K\le H$ and $\eta|_{K}=\nu$ on $\mathcal{E}$.
\end{enumerate}
\end{corollary}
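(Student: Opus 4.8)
The plan is to prove the corollary directly from the explicit construction of $\mathcal{D}(\mathcal{E}, H, \eta)$ given in the proof of Theorem \ref{MainTheorem}, tracking Frobenius-Perron dimensions and inclusions through the chain of equivalences $(\cC^H)^{G/H} \cong \cC^G$. Recall that $\mathcal{D}(\mathcal{E},H,\eta)$ is obtained by first lifting $\mathcal{E}$ to $\mathcal{E}^\rho \le \cC^H$ via the trivialization $\eta$, then forming $(\mathcal{E}^\rho)^{G/H}$, and finally transporting along the canonical equivalence. Since equivalences of fusion categories preserve $\operatorname{FPdim}$, I would establish part (1) by computing the dimension at each stage.

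\medskip

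For part (1), the key observation is that the lifting condition guarantees $F_H$ restricts to a fully faithful (hence dimension-preserving) embedding $\mathcal{E}^\rho \hookrightarrow \cC^H$ whose image has the same Frobenius-Perron dimension as $\mathcal{E}$; this is precisely the content of Lemma \ref{transversallemma} and Lemma \ref{lemma: equivalentsubcategorycharacterization}. Thus $\operatorname{FPdim}(\mathcal{E}^\rho) = \operatorname{FPdim}(\mathcal{E})$. Then I would invoke the standard fact that for any fusion category $\mathcal{A}$ with a $K$-action, equivariantization multiplies the Frobenius-Perron dimension by $|K|$, i.e. $\operatorname{FPdim}(\mathcal{A}^K) = |K| \cdot \operatorname{FPdim}(\mathcal{A})$ (this is \cite[Proposition 4.26 or similar]{DGNO}). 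Applying this with $\mathcal{A} = \mathcal{E}^\rho$ and $K = G/H$ gives $\operatorname{FPdim}((\mathcal{E}^\rho)^{G/H}) = [G:H] \cdot \operatorname{FPdim}(\mathcal{E})$, and the final equivalence to a subcategory of $\cC^G$ preserves this value. This settles (1).

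\medskip

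For part (2), I would unwind what the inclusion $\mathcal{D}(\mathcal{E},H,\eta) \le \mathcal{D}(\mathcal{F},K,\nu)$ means for the parametrizing data. The forward direction of the "if and only if" requires decoding the inclusion into conditions on each piece of the triple. The containment $\mathcal{E} \le \mathcal{F}$ follows by applying the forgetful functor $F_G$: the image of $\mathcal{D}(\mathcal{E},H,\eta)$ under $F_G$ generates $\mathcal{E}$ (by the inverse construction in the theorem), so an inclusion of subcategories forces $\mathcal{E} \le \mathcal{F}$. The condition $K \le H$ (note the reversal) comes from intersecting with $\operatorname{Rep}(G)$: since $\operatorname{Rep}(G) \cap \mathcal{D}(\mathcal{E},H,\eta) \cong \operatorname{Rep}(G/H)$, an inclusion of the $\mathcal{D}$'s gives $\operatorname{Rep}(G/H) \le \operatorname{Rep}(G/K)$, and because $\operatorname{Rep}(G/H) \le \operatorname{Rep}(G/K)$ as subcategories of $\operatorname{Rep}(G)$ holds exactly when $K \le H$, we recover the normal subgroup condition. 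Finally, the trivialization compatibility $\eta|_K = \nu$ on $\mathcal{E}$ expresses that the equivariant structures must agree on the common subcategory: on objects of $\mathcal{E}$, the lifting data in $\mathcal{D}(\mathcal{E},H,\eta)$ restricted to the $K$-action must coincide with the lifting data of $\mathcal{D}(\mathcal{F},K,\nu)$, since both describe the unique equivariant structure on each object once the subcategory inclusion is fixed (using fully faithfulness of the forgetful functor from Lemma \ref{lemma: equivalentsubcategorycharacterization}).

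\medskip

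The main obstacle I anticipate is the bookkeeping in part (2), specifically verifying the converse direction and pinning down the precise sense in which $\eta|_K = \nu$ "on $\mathcal{E}$". One must check that the three conditions $\mathcal{E}\le\mathcal{F}$, $K\le H$, $\eta|_K=\nu$ are not only necessary but jointly sufficient to reconstruct the inclusion $\mathcal{D}(\mathcal{E},H,\eta)\le\mathcal{D}(\mathcal{F},K,\nu)$. This amounts to showing that an object $(x,\sigma)\in\mathcal{D}(\mathcal{E},H,\eta)$, viewed in $\cC^G$, actually lands in $\mathcal{D}(\mathcal{F},K,\nu)$; the cleanest route is to compare both subcategories inside the iterated equivariantization and argue that the generating equivariant objects of the smaller subcategory satisfy the defining lifting/trivialization constraints of the larger one, which reduces to the equality $\eta|_K=\nu$ together with the bijective correspondence from the theorem. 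Since the construction is functorial in the data, I expect this to follow formally once the dictionary between inclusions and the three conditions is set up carefully, but the normal-subgroup reversal and the restriction of trivializations are the points where sign-of-inclusion errors are most likely to creep in.
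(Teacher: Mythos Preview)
Your proposal is correct and is precisely the argument the paper has in mind: the paper offers no proof at all beyond the remark ``From the construction, we have the following corollary,'' so you have simply made explicit the tracking of $\operatorname{FPdim}$ and of inclusions through the chain $\mathcal{E}\mapsto\mathcal{E}^{\rho}\mapsto(\mathcal{E}^{\rho})^{G/H}\cong\mathcal{D}(\mathcal{E},H,\eta)$ that the authors leave to the reader. Your identification of the three invariants recovered from a subcategory $\mathcal{D}$ (image under $F_G$, intersection with $\operatorname{Rep}(G)$, and the induced lifting) matches exactly the inverse construction in the proof of Theorem~\ref{MainTheorem}, so the approach is the same.
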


\section{Pointed fusion categories}\label{pointed examples}

Let $K$ be a finite group and $\omega\in Z^{3}(K, \mathbbm{k}^{\times})$. The data of a categorical action of the group $G$ on $\text{Vec}(K, \omega)$ can be described by the following:

\begin{enumerate}
    \item 
    A homomorphism $G\rightarrow \text{Aut}(K)$, $g\mapsto g_{*}$
    \item
    A collection of scalars $\beta^{k,l}_{g}\in \mathbbm{k}^{\times}$ for $k,l\in K, g\in G$ and a collection of scalars $\mu^{k}_{g,h}\in \mathbbm{k}^{\times}$ for $k\in K$ and $g,h\in G$, satisying the following equations:
    
\begin{equation}\label{eq: equation beta}
\frac{\omega_{k,l,m}}{\omega_{g_{*}(k),g_{*}(l),g_{*}(m)}}=\frac{\beta^{k,l}_{g}\beta^{kl,m}_{g}}{\beta^{l,m}_{g}\beta^{k,lm}_{g}}
\end{equation}

\begin{equation}\label{eq: 2-cocycle condition mu}
\mu^{k}_{g,h}\mu^{k}_{f,gh}=\mu^{h(k)}_{f,g}\mu^{k}_{fg,h}
\end{equation}

\begin{equation}\label{eq: compatibility mu and beta}
\frac{\beta^{k,l}_{gh}}{\beta^{h_{*}(k),h_{*}(l)}_{g}\beta^{k,l}_{h}}=\frac{\mu^{k}_{g,h}\mu^{l}_{g,h}}{\mu^{kl}_{g,h}}
\end{equation}

\end{enumerate}
To realize this data, pick a skeletal model of $\text{Vec}(K, \omega)$ with the associator given by $\omega$ explicitly. Then extend the action of $g_{*}$ on $K$ linearly to a functor. By semi-simplicity it suffices to define natural transformations of functors on simple objects. The tensorator $g_{*}(k\otimes l)\cong g_{*}(k)\otimes g_{*}(l)$ is given by $\beta^{k,l}_{g}$, and equation \eqref{eq: equation beta} above is exactly the required compatibility with the associator. We define the natural isomorphisms $gh_{*}(k)\cong g_{*}\circ h_{*}(k) $ by $\mu^{k}_{g,h}$. Equation \eqref{eq: compatibility mu and beta} ensures these are monoidal natural isomorphisms, and equation \eqref{eq: 2-cocycle condition mu} guarantees these assemble into a categorical action.

Now, our goal is to apply our general results to parameterize subcategories of the equivariantization in terms of the above data. The only remaining task is to interpret the G-equivariant trivializations in terms of this data.

Let $L\le K$ be a G-invariant subgroup, and $H\le G$ a normal subgroup such that $h_{*}|_{L}$ is trivial. First we can unpack the definition of a trivialization of the H action on $L$. 

It consists of a family $\eta^{k}_{h}\in \mathbbm{k}^{\times}$ for $k\in L, h\in H$ satisfying for all $g,h\in H$

\begin{equation}\label{eq: eta is coboundary of beta}
\eta^{lk}_{g}=\eta^{l}_{g}\eta^{k}_{g}\beta^{l,k}_{g}
\end{equation}

\begin{equation}\label{eq: eta is coboundary of mu}
\eta^{l}_{g}\eta^{l}_{h}=\eta^{l}_{gh}\mu^{l}_{g,h}
\end{equation}

Viewing $\eta: L\times H\rightarrow \mathbbm{k}$, we see that it is almost a bicharacter. In particular, it is a character in each variable up to $\beta$ and $\mu$ respectively. Thus we will call $\eta: L\times H\rightarrow \mathbbm{k}$ satisfying the above equations a $(\beta, \mu)$-bicharacter. 

Now, we can unpack the definition of a $G$-equivariant tivialization to obtain

\begin{equation}\eta^{g_{*}(k)}_{h}=\eta^{k}_{g^{-1}hg}\left( \frac{\mu^{k}_{g^{-1},hg} \mu^{k}_{h,g}}{\mu^{gh_{*}(k)}_{g,g^{-1}}}\right)
\end{equation}

or equivalently 

\begin{equation}\label{bicharacter}\eta^{g_{*}(k)}_{ghg^{-1}}=\eta^{k}_{h}\left( \frac{\mu^{k}_{g^{-1},gh} \mu^{k}_{ghg^{-1},g}}{\mu^{g(k)}_{g,g^{-1}}}\right)
\end{equation}

\noindent where we have used in the superscript on the denominator that $H$ is normal in $G$, and since $H$ acts trivially on $L$, $g(ghg^{-1})_{*}(k)=g(k)$ for all $k\in L$.

Note that we can assume $\mu^{k}_{h,g}$ is \textit{normalized} so that $\mu^{k}_{h,g}=1$ for all $k$ if $h$ or $g$ is 1. Then the defining condition for $\mu$ implies $\mu^{k}_{g^{-1},g}=\mu^{g_{*}(k)}_{g,g^{-1}}$. Furthermore, the defining equation, together with normalization and the fact that $h$ acts trivially on $L$ implies equation 

$$\mu^{k}_{g,h}\mu^{k}_{g^{-1},gh}=\mu^{k}_{g^{-1},g}.$$

\noindent Solving for $\mu^{k}_{g^{-1},gh}$, replacing $\mu^{k}_{g^{-1},g}$ with $\mu^{g_{*}(k)}_{g,g^{-1}}$, and substituting into equation \eqref{bicharacter} gives

\begin{equation}\label{bicharacter-2}\eta^{g_{*}(k)}_{ghg^{-1}}=\eta^{k}_{h}\left( \frac{ \mu^{k}_{ghg^{-1},g}}{\mu^{k}_{g,h}}\right).
\end{equation}

\begin{definition} A $(\beta,\mu)$ bicharacter satisfying equations \eqref{bicharacter} or \eqref{bicharacter-2} is called a \textit{G-equivariant} $(\beta, \mu)$ bicharacter.
\end{definition}

\begin{corollary}\label{subcategoriespointed}
Consider a categorical action of $G$ on $\text{Vec}(K,\omega)$, given by a homomorphism $G\rightarrow \text{Aut}(K)$, together with $(\beta,\mu)$ as above. Then fusion subcategories of  $\text{Vec}(K, \omega)^{G}$ are parameterized by triples $(L,H,\eta)$ where $L\le K$ is a G-invariant subgroup, $H$ is a normal subgroup of $G$ fixing every element of $L$, and $\eta$ is a $G$-invariant $(\beta, \mu)$ bicharacter.
\end{corollary}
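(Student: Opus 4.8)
The plan is to obtain this corollary as a direct specialization of Theorem \ref{MainTheorem}, translating each of its three pieces of data into the combinatorial language of $\text{Vec}(K,\omega)$; most of the required unpacking is already recorded in the discussion preceding the statement.

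First I would translate condition $(1)$ of the theorem. Every fusion subcategory of $\text{Vec}(K,\omega)$ is generated by the invertible simple objects it contains, and these form a subgroup $L\le K$, so fusion subcategories correspond bijectively to subgroups $L\le K$ (with the induced associator $\omega|_{L}$). Under this correspondence a subcategory $\mathcal{E}=\text{Vec}(L,\omega|_{L})$ is $G$-invariant precisely when $L$ is stable under the homomorphism $G\rightarrow \text{Aut}(K)$, matching the first datum of the corollary. Condition $(2)$, that $H\le G$ be normal, is carried over verbatim.

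Next I would handle condition $(3)$. The key observation is that a monoidal natural isomorphism $\eta_{h}\colon h_{*}\cong \text{Id}$ on $\text{Vec}(L,\omega|_{L})$ can exist only if $h_{*}$ is isomorphic to the identity functor; since distinct simple objects $\delta_{k}$ are pairwise non-isomorphic in a pointed category, this forces $h_{*}(k)=k$ for every $k\in L$, i.e. $H$ fixes $L$ pointwise. Thus the pointwise-fixing hypothesis in the corollary is not an extra assumption but is built into the existence of a trivialization. Conversely, under that hypothesis each $h_{*}$ is the identity on objects, so a natural isomorphism to $\text{Id}$ is the same as a choice of scalars $\eta^{k}_{h}\in \mathbbm{k}^{\times}$; the monoidality of each $\eta_{h}$ is exactly equation \eqref{eq: eta is coboundary of beta}, and the trivialization condition \eqref{eq: trivialization equ} is exactly \eqref{eq: eta is coboundary of mu}. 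Hence trivializations of the $H$-action on $\mathcal{E}$ are precisely $(\beta,\mu)$-bicharacters.

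Finally, I would note that the $G$-equivariance condition, namely the commutativity of diagram \eqref{diag: condition equivariant lifting}, unpacks on simple objects to equation \eqref{bicharacter}, which simplifies via the normalization of $\mu$ to \eqref{bicharacter-2}; by definition this is exactly a $G$-equivariant $(\beta,\mu)$-bicharacter. Assembling the three translations and invoking Theorem \ref{MainTheorem} then yields the asserted parameterization. The main obstacle is the clean identification in the middle step: one must verify both that the abstract trivialization data is faithfully captured by the scalar system $\eta^{k}_{h}$ and that the existence of any trivialization already forces $h_{*}|_{L}=\text{id}$, so that no trivializations are lost or spuriously introduced. The remaining steps are the cocycle-type manipulations already displayed in the text.
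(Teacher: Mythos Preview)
Your proposal is correct and follows essentially the same approach as the paper: the corollary has no separate proof in the paper because the entire discussion of Section~\ref{pointed examples} preceding it is devoted to exactly the translations you outline, specializing Theorem~\ref{MainTheorem} term by term to the pointed case. Your explicit remark that the pointwise-fixing condition on $H$ is forced by the mere existence of a trivialization (since non-isomorphic simples cannot be connected by a natural isomorphism) makes precise something the paper leaves implicit, but otherwise the arguments coincide.
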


\subsection{$\mathcal{Z}(\text{Vec}(G,\omega))$}

Now we compare our parametrizations with the results of \cite{MR2552301}. To state their results, we need to introduce some notation. Let $\omega\in Z^{3}(G, \mathbbm{k}^{\times})$. Following \cite[Section 5.1]{MR2552301}), for any $a,x,y\in G$ define the quantities

$$\eta_{a}(x,y):=\frac{\omega(x,y,a)\omega(xyay^{-1}x^{-1},x,y)}{\omega(x,yay^{-1},y)}$$

$$\nu_{a}(x,y):=\frac{\omega(axa^{-1},aya^{-1},a)\omega(a,x,y)}{\omega(axa^{-1},a,y)}$$

Setting $\mu^{k}_{g,h}:=\eta_{k}(g,h)$ and $\beta^{h,k}_{g}=\nu_{g}(h,k)$, and $g_{*}(h):=ghg^{-1}$, then it's direct calculations show this data satisfies equations \eqref{eq: equation beta}, \eqref{eq: 2-cocycle condition mu}, \eqref{eq: compatibility mu and beta} (see \cite[Equation 21]{MR2552301}) and thus provides a $G$ action on $\text{Vec}(G, \omega)$ via conjugation. It is well known that $\text{Vec}(G, \omega)^{G}\cong \mathcal{Z}(\text{Vec}(G, \omega))$.

Thus according to Corollary \ref{subcategoriespointed}, since the action of $G$ on $G$ is by conjugation, the subcategories of $Z(\text{Vec}(G,\omega))$ are parameterized by triples $(L,H,\eta)$, where $L, H$ are commuting normal subgroups of $G$ and $\eta$ is a G-invariant $(\beta, \mu)$-bicharacter.

The parameterization in \cite{MR2552301} is given by triples $(L,H,B)$, where $L, H$ are commuting normal subgroups of $G$, and $B:L\times H\rightarrow \mathbbm{k}^{\times}$ is a $G$-invariant $\omega$-bicharacter (\cite[Definition 5.4]{MR2552301}). Using \cite[Equation 19]{MR2552301}, we see that $B$ is an $\omega$-bicharacter if and only if $B^{-1}$ is a $(\beta,\mu)$-bicharacter in our sense.

\subsection{Obstruction theory}

In Corollary \ref{subcategoriespointed}, we parameterized the subcategories of the equivariantization of a pointed fusion category in terms of algebraic data, which can be described as solutions to families of equations with a cohomological flavor. It is desirable to have an obstruction theory theory that allows one to determine whether such data exists or not, and to be able to count the solutions if they do exist. In this section, we present some results in this direction.

Let $G$ be a finite group acting on  $\text{Vec}(K, \omega)$ with associated data $\beta_{g}^{k,l}$ and $\mu_{g,h}^k$. Let $L \leq K$ be a $G$-invariant subgroup and $H\leq G$ a normal subgroup such that $H$ acts trivially on $L$. We will analyze the cohomological conditions for the existence of a $G$-equivariant $(\beta,\mu)$-bicharacter $\eta:L \times H\to \ku^\times$.

\subsubsection{Existence of $(\beta,\mu)$ bicharacters.}

Since $H$ acts trivially on $L$, it follows from equation \eqref{eq: equation beta} that  $$(\beta_h)|_{L\times L}\in Z^2(L,\ku^\times),$$ for all $h\in H$. If follows from \eqref{eq: eta is coboundary of beta} that a \emph{first necessary condition} for the existence of a $(\beta,\mu)$ bicharacter is that the cohomology class of $(\beta_h)|_{L\times L}$ is trivial for all $h\in H$. If the cohomology class $(\beta_h)|_{L\times L}$ is trivial for all $h\in H$, there exist $\tau:L\times H\to \ku^\times$ such that \[\tau^{h}_{l_1l_2}=\tau^{h}_{l_1}\tau^{h}_{l_2}\beta_{h}^{l_1,l_2}\]
holds for all $h\in H, l_1,l_2 \in L$.

The function 
\begin{align*}
\tilde{\mu} :H\times H\times L &\to \ku^\times \\  (h_1,h_2,l) &\mapsto \mu_{h_1,h_2}^l \frac{\tau_{l}^{h_1h_2}}{\tau_l^{h_1}\tau_l^{h_2}},
\end{align*}
defines an element  in $\tilde{\mu} \in Z^2(H,\operatorname{Hom}(L,\ku^\times)).$ In fact, 

\begin{align*}
    \tilde{\mu}^{l_1}_{h_1,h_2}\tilde{\mu}^{l_2}_{h_1,h_2}&=\mu_{h_1,h_2}^{l_1}\mu_{h_1,h_2}^{l_2} \frac{\tau_{l_1}^{h_1h_2}}{\tau_{l_1}^{h_1}\tau_{l_1}^{h_2}} \frac{\tau_{l_2}^{h_1h_2}}{\tau_{l_2}^{h_1}\tau_{l_2}^{h_2}}\\
    &=\mu_{h_1,h_2}^{l_1}\mu_{h_1,h_2}^{l_2}\beta^{l_1,l_2}_{h_1}\beta^{l_1,l_2}_{h_2} \frac{\tau_{l_1}^{h_1 h_2}\tau_{l_2}^{h_{1} h_{2}}}{\tau_{l_1l_2}^{h_1} \tau_{l_1l_2}^{h_2}}\\
    &=\mu^{l_1l_2}_{h_1,h_2}\beta_{h_1 h_2}^{l_1,l_2} \frac{\tau_{l_1}^{h_1 h_2}\tau_{l_2}^{h_{1} h_{2}}}{\tau_{l_1l_2}^{h_1} \tau_{l_1l_2}^{h_2}}\\
    &=\mu^{l_1l_2}_{h_1,h_2}\frac{\tau_{l_1l_2}^{h_1h_2}}{\tau^{h_1}_{l_1l_2}\tau^{h_2}_{l_1l_2}}\\
    &=\tilde{\mu}^{l_1l_2}_{h_1,h_2},
\end{align*}
for all $l_1,l_2\in L, h_1,h_2\in H$. It is easy to see that the cohomology class of $\tilde{\mu} \in Z^2(H,\operatorname{Hom}(L,\ku^\times))$ does not depend of the choice of $\tau$. Hence, it follows from equation \eqref{eq: eta is coboundary of mu} that a \emph{second necessary condition} for the existence of a $(\beta, \mu)$-bicharacter is the triviality of the cohomology of $\tilde{\mu}$. Moreover, the triviality of the cohomology of $\tilde{\mu}$ is a sufficient condition for the existence of a $(\beta, \mu)$-bicharacter. In fact, if $\chi \in \operatorname{Hom}(L,\ku^\times)$ is such that $\tilde{\mu}(h_1,h_2)=\frac{\chi_{h_1h_2}}{\chi_{h_1}\chi_{h_2}}$, then $\eta_h^l=\tau_h^l\chi_h^l$ is a $(\beta,\mu)$-bicharacter.

\subsubsection{Existence of $G$-invariant $(\beta, \mu)$-bicharacters.}
It is easy to see that the set of all $(\beta, \mu)$-characters is a torsor over the abelian group of all bicharacters $H\times L\to \ku^\times$. Let $\eta:H\times L\to \ku^\times$ be a $(\beta, \mu)$-bicharacter. Define the function \begin{align}
\Omega(\eta,\mu):G\times H\times L &\to \ku^\times \\
(g,h,l) &\mapsto \left (\frac{\eta^{l}_{h}}{\eta^{g_{*}(l)}_{ghg^{-1}}} \right)\left( \frac{ \mu^{l}_{ghg^{-1},g}}{\mu^{l}_{g,h}}\right) \notag.
\end{align}

\begin{proposition}
The following conditions are necessary and sufficient for the existence of a $G$-invariant $(\beta, \mu)$-bicharacter.

\begin{enumerate}
    \item $\Omega(\eta,\mu)$ defines an element in $Z^1(G,\operatorname{Hom}(L,\operatorname{Hom}(H,\ku^\times)))$ whose cohomology class does not depend on the choice of $\eta$. In the above group, the action of $G$ on a bicharacter is given by $^g(\chi_h^l)=\chi_{ghg^{-1}}^{g*(l)}$ for all $g\in G, h\in H, l \in L$.
    \item The cohomology class of $\Omega(\eta,\mu)$ vanishes.
    
 \end{enumerate}
\end{proposition}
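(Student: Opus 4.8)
The plan is to recognize the situation as a standard instance of torsor cohomology and to reduce the two assertions to formal consequences, isolating the one genuinely computational input. Throughout I assume a $(\beta,\mu)$-bicharacter exists (otherwise the two necessary conditions of the previous subsection already obstruct everything), and I write $M:=\operatorname{Hom}(L,\operatorname{Hom}(H,\ku^\times))$ for the abelian group of honest bicharacters $H\times L\to\ku^\times$, equipped with the $G$-action $({}^g\chi)^l_h=\chi^{g_*(l)}_{ghg^{-1}}$ from the statement. As recorded just above the proposition, the set $B_{\beta,\mu}$ of all $(\beta,\mu)$-bicharacters is a torsor over $M$: any two differ by a unique element of $M$, and $B_{\beta,\mu}$ is stable under multiplication by $M$.

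First I would package the $G$-equivariance condition \eqref{bicharacter-2} as the requirement that $\eta$ be a fixed point of a $G$-action on $B_{\beta,\mu}$. Define $(g\triangleright\eta)^l_h:=\eta^{g_*(l)}_{ghg^{-1}}\,\mu^l_{g,h}/\mu^l_{ghg^{-1},g}$, so that $\Omega(\eta,\mu)(g,\cdot,\cdot)=\eta/(g\triangleright\eta)$ by definition and $\eta$ is $G$-invariant precisely when $g\triangleright\eta=\eta$ for all $g$. Three things must be verified: (a) $g\triangleright\eta$ again lies in $B_{\beta,\mu}$, i.e.\ it satisfies \eqref{eq: eta is coboundary of beta} and \eqref{eq: eta is coboundary of mu}; (b) $\triangleright$ is genuinely an action, $g_1\triangleright(g_2\triangleright\eta)=(g_2g_1)\triangleright\eta$, matching the composition convention built into the module action on $M$; and (c) $\triangleright$ is compatible with the torsor structure, $g\triangleright(\eta\cdot\chi)=(g\triangleright\eta)\cdot({}^g\chi)$ for $\chi\in M$. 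Claim (c) is immediate, since the $\mu$-correction attaches only to the $\eta$-factor while the bicharacter $\chi$ transforms exactly by the superscript/subscript substitution defining ${}^g\chi$. Claims (a) and (b) are where the defining identities \eqref{eq: 2-cocycle condition mu} and \eqref{eq: compatibility mu and beta}, together with the normalizations used to pass from \eqref{bicharacter} to \eqref{bicharacter-2}, must be combined; this is the main obstacle, and I return to it below.

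Granting (a)--(c), condition (1) becomes formal. Because each $g\triangleright\eta$ again lies in $B_{\beta,\mu}$, the ratio $\Omega(\eta,\mu)(g,\cdot,\cdot)=\eta/(g\triangleright\eta)$ is an honest bicharacter, i.e.\ an element of $M$, so $\Omega(\eta,\mu)$ is a $1$-cochain $G\to M$. The action axiom (b) together with the compatibility (c) then shows, by the standard telescoping argument, that $g\mapsto\Omega(\eta,\mu)(g,\cdot,\cdot)$ satisfies the $1$-cocycle condition for the $G$-module $M$ described in the statement, so $\Omega(\eta,\mu)\in Z^1(G,M)$. For independence of the choice, if $\eta'=\eta\cdot\chi$ is another $(\beta,\mu)$-bicharacter with $\chi\in M$, then (c) gives $\Omega(\eta',\mu)(g)=\Omega(\eta,\mu)(g)\cdot\chi\,({}^g\chi)^{-1}$, and $g\mapsto\chi\,({}^g\chi)^{-1}$ is a $1$-coboundary; hence $[\Omega(\eta',\mu)]=[\Omega(\eta,\mu)]$ in $H^1(G,M)$, establishing condition (1).

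Finally, existence of a $G$-invariant $(\beta,\mu)$-bicharacter is equivalent to condition (2). If some $\eta$ is $G$-invariant, then $g\triangleright\eta=\eta$ for all $g$, so $\Omega(\eta,\mu)\equiv 1$ and its class vanishes; by the independence just proved, the class computed from any $\eta$ vanishes. Conversely, if $[\Omega(\eta,\mu)]=0$ we may write $\Omega(\eta,\mu)(g)=\chi\,({}^g\chi)^{-1}$ for some $\chi\in M$, and then a direct check using (c) shows that $\eta':=\eta\cdot\chi^{-1}$ satisfies $\Omega(\eta',\mu)\equiv 1$, i.e.\ $\eta'$ is $G$-invariant. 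The only non-formal work in the whole argument is establishing (a) and (b): both reduce to identities among the scalars $\mu^{\,\cdot}_{\cdot,\cdot}$ with conjugated group arguments, which follow from repeated application of the $2$-cocycle relation \eqref{eq: 2-cocycle condition mu} and the normalization $\mu^k_{g^{-1},g}=\mu^{g_*(k)}_{g,g^{-1}}$, using crucially that $H$ is normal in $G$ and acts trivially on $L$. I expect this bookkeeping, rather than any conceptual point, to be the real labor.
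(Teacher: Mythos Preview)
Your approach is correct but genuinely different from the paper's. The paper never verifies directly that $\Omega(\eta,\mu)$ is a $1$-cocycle for an arbitrary $(\beta,\mu)$-bicharacter $\eta$; instead it \emph{assumes} a $G$-invariant $(\beta,\mu)$-bicharacter $\tau$ exists, writes $\eta=\tau\chi$ with $\chi\in M$, and computes $\Omega(\eta,\mu)_g=\chi/{}^g\chi$, which is visibly a coboundary (hence a cocycle) with class independent of $\chi$. The converse direction is then the same as yours. In other words, the paper proves the equivalence ``$G$-invariant exists $\Leftrightarrow$ (1) and (2)'' by treating (1) as part of the package rather than as an unconditional fact.

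Your torsor-action reformulation proves more: that (1) holds regardless of whether a $G$-invariant bicharacter exists, isolating (2) as the genuine obstruction. The price is that you must carry out the verifications (a) and (b), which the paper completely sidesteps by working with an expression already known to be a coboundary. These checks are indeed routine consequences of \eqref{eq: compatibility mu and beta} and \eqref{eq: 2-cocycle condition mu} (for instance, (a) for equation \eqref{eq: eta is coboundary of beta} falls out of two applications of \eqref{eq: compatibility mu and beta} with $(a,b)=(g,h)$ and $(a,b)=(ghg^{-1},g)$), but you should be aware that the paper does not perform them and that a reader following the paper would not see an unconditional proof of (1). Your framework is cleaner conceptually and yields a sharper statement; the paper's argument is shorter because it leans on the assumed existence of $\tau$ from the outset.
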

\begin{proof}
Let $\tau:H\times L \to \ku^\times$ be a $G$-invariant $(\beta, \mu)$-bicharacter. There is a bicharacter $\chi:L\times H\to \ku^\times$ such that $\eta=\tau \chi$. Hence,

\begin{align*}
    \Omega(\eta,\mu)_{g,h,l}=\frac{\chi^{l}_{h}}{\chi^{g_{*}(l)}_{ghg^{-1}}},
\end{align*}and 
 
\begin{align*}
\Omega(\eta,\mu)_{g_1,h,l} \ ^{g_1}\left( \Omega(\eta,\mu)_{g_2,h,l} \right)&=\frac{\chi^{l}_{h}}{\chi^{(g_1)_{*}(l)}_{h}} \ ^{g_1}\left( \frac{\chi^{l}_{h}}{\chi^{(g_2)_{*}(l)}_{h}} \right)\\
&=\frac{\chi^{l}_{h}}{\chi^{(g_1g_2)_{*}(l)}_{h}}\\
&=\Omega(\eta,\mu)_{g_1g_2,h,l},
\end{align*}
so $\Omega(\eta,\mu)\in Z^1(G,\operatorname{Hom}(K,\operatorname{Hom}(H,\ku^\times)))$ and the cohomology does not depend on $\eta$.

It follows by the definition of $\Omega(\eta,\mu)$ that it vanishes if $\eta$ is  $G$-invariant. Conversely, if  $\Omega(\eta,\mu)\in Z^1(G,\operatorname{Hom}(K,\operatorname{Hom}(H,\ku^\times)))$  and there exists a bicharacter $\chi$ such that $\Omega(\eta,\mu)_{g,h,l}=\frac{\chi^{l}_{h}}{\chi^{g_{*}(l)}_{h}}$ then $\tau =\frac{\eta}{\chi}$ is a $G$-invariant
\end{proof}

\section{The lattice of Hopf subalgebras of a Hopf algebra of  Kac-Paljutkin type}\label{application}

\subsection{Hopf algebra of  Kac-Paljutkin type}

Let $G$ be a finite  group acting on a finite group $K$ and $(\beta,\mu)$ a data defining an action of $G$ on $\operatorname{Vec}(K)$.

Let us denote $\ku^G:= {\rm{Maps}}(G,\ku)$, let $\delta_g \in \ku^G$  be the function that assigns $1$ to $g$ and $0$ otherwise, and let $\delta_{g,h}$ be the Dirac's delta associated to the pair $g,h \in G$, namely $\delta_{g,h}$ is $1$ whenever $g= h$ and $0$ otherwise.

The vector space $\ku^G\#_{\mu}^\beta \ku K$ with basis
$\{\delta_g\# x| g\in G, x\in K\}$ is a Hopf algebra
with product
\[(\delta_g\#x)(\delta_h \# y)=
\delta_{g,h}\beta^{x,y}_g \delta_g\#xy,\] coproduct,
\[\Delta(\delta_g\# x)=
\sum_{a,b\in G: ab=g}\mu_{a,b}^x\delta_a\# b\cdot x\ot
\delta_b\#x,\]
counit, unit and antipode
\begin{align*}
\varepsilon(\delta_g\# x)=\delta_{g,e}, &&  1\#e, && S(\delta_g\# x)=\frac{1}{\beta_{(gx)^{-1}}^{gx,g^{-1}}\mu_{g^{-1},g}^x}\delta_{g^{-1}}\# (gx)^{-1},
\end{align*}

for all $g,h\in G$, $x,y\in K$. See \cite{MR1913439} for more details.

\begin{corollary}Let $\ku^G\#_{\mu}^\beta \ku K$ be a semisimple Hopf algebra of Kac-Paljutkin type, then the lattice of Hopf subalgebras is in correspondence with the lattice of triples $(L, H, \eta)$ where

\begin{enumerate}
    \item 
    $L\le K$ is a  $G$-invariant subgroup.
    \item
    $H\le G$ is a normal subgroup.
    \item
    $\eta:H\times L\rightarrow \ku^{\times}$ is a $G$-invariant $(\beta,\mu)$-bicharacter.
\end{enumerate}
\end{corollary}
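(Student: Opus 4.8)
The plan is to reduce the statement entirely to Corollary~\ref{subcategoriespointed} by invoking the dictionary between Hopf algebras and their comodule categories, together with the description of semisimple Hopf algebras of Kac-Paljutkin type as equivariantizations. First I would recall from \cite{MR1375826} that for a finite-dimensional semisimple Hopf algebra $A$, the lattice of Hopf subalgebras of $A$ is isomorphic to the lattice of fusion subcategories of its comodule category $\operatorname{Comod}(A)$. Thus it suffices to identify $\operatorname{Comod}(\ku^G\#_{\mu}^\beta \ku K)$ as a fusion category to which Corollary~\ref{subcategoriespointed} applies.

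The key step is then to show that this comodule category is monoidally equivalent to the equivariantization $\operatorname{Vec}(K)^{G}$, where $G$ acts on $\operatorname{Vec}(K)$ via the homomorphism $G\to \operatorname{Aut}(K)$ and the data $(\beta,\mu)$ exactly as set up at the start of Section~\ref{pointed examples}. Concretely, the central Hopf subalgebra $\ku^G\cong \operatorname{Fun}(G)$ sitting inside $\ku^G\#_{\mu}^\beta \ku K$ produces a central exact sequence $\ku\to \ku^G\to \ku^G\#_{\mu}^\beta\ku K\to \ku K\to \ku$, and by the de-equivariantization philosophy recalled in the introduction (via \cite{MR3160718}), the comodule category of the total Hopf algebra is the $G$-equivariantization of the comodule category of the quotient $\ku K$, which is $\operatorname{Vec}(K)$. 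I would verify that the crossed-product structure maps---the product twisted by $\beta$ and the coproduct twisted by $\mu$, as displayed in the formulas for $\ku^G\#_{\mu}^\beta\ku K$---are precisely the categorical data $(\beta,\mu)$ defining the $G$-action on $\operatorname{Vec}(K)$ in equations \eqref{eq: equation beta}, \eqref{eq: 2-cocycle condition mu}, \eqref{eq: compatibility mu and beta}. This matching is essentially a bookkeeping translation between the Hopf-algebraic cocycle conditions and the categorical ones.

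Once the monoidal equivalence $\operatorname{Comod}(\ku^G\#_{\mu}^\beta\ku K)\cong \operatorname{Vec}(K)^{G}$ is established, I would simply apply Corollary~\ref{subcategoriespointed} with $\omega$ trivial (since the quotient is the ungraded $\operatorname{Vec}(K)$, so no associator twist is present). That corollary parameterizes the fusion subcategories of $\operatorname{Vec}(K)^{G}$ by triples $(L,H,\eta)$ with $L\le K$ a $G$-invariant subgroup, $H\le G$ normal acting trivially on $L$, and $\eta$ a $G$-invariant $(\beta,\mu)$-bicharacter. Transporting this back through the Hopf subalgebra/fusion subcategory lattice isomorphism gives exactly the claimed correspondence, and since both isomorphisms are lattice isomorphisms (order-preserving in both directions), the correspondence respects the lattice structure, matching part~(2) of the Corollary preceding Section~\ref{pointed examples}.

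The main obstacle I anticipate is the careful verification that the normality of $H$ together with the condition that $H$ fixes $L$ pointwise is correctly captured on the Hopf-algebra side, and that the crossed-product cocycle data $(\beta,\mu)$ of Kac-Paljutkin type genuinely coincides---on the nose, not merely up to equivalence---with the categorical action data of Section~\ref{pointed examples}; a sign error or index transposition in matching the twisted coproduct to $\mu_{g,h}^x$ would break the dictionary. Establishing the equivalence $\operatorname{Comod}(\ku^G\#_{\mu}^\beta\ku K)\cong \operatorname{Vec}(K)^{G}$ rigorously, rather than by analogy, is the real content; after that, the statement follows formally from Corollary~\ref{subcategoriespointed}.
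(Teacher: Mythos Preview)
Your proposal is correct and follows essentially the same approach as the paper: reduce to Corollary~\ref{subcategoriespointed} via the lattice isomorphism from \cite{MR1375826} together with the monoidal equivalence $\operatorname{Comod}(\ku^G\#_{\mu}^\beta \ku K)\cong \operatorname{Vec}(K)^{G}$. The only difference is that the paper does not rederive this last equivalence through the de-equivariantization framework of \cite{MR3160718} and a direct cocycle-matching argument as you propose, but simply cites \cite[Theorem 7.1]{Galindo-Uribe} for it; your outline is effectively a sketch of what that reference establishes.
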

\begin{proof}
It follows from Corollary \ref{subcategoriespointed} that the triples are in correspondence with fusion subcategories of $\operatorname{Vec}(K)^G$. Now, the fusion category of right $\ku^G\#_\mu^\beta \ku K$-comodules is tensor isomorphic to the fusion category $\operatorname{Vec}(K)^G$, (see \cite[Theorem 7.1]{Galindo-Uribe}) hence  by \cite[Theorem 6]{MR1375826} they correspond to Hopf subalgebras of $\ku^G\#_\mu^\beta \ku K$. 
\end{proof}

\subsection{Concrete example}

Let $G=C_2=\langle \sigma \rangle$ a cyclic group of order two and $K=\Z/n\times \Z/n$. Consider the action of $G$ on $K$ given by $\sigma(a,b)=(b,a)$. Since we can consider only normalized maps $\beta:C_2\times K\times K\to \ku^\times$ $\mu:C_2\times C_2\times K \to \ku^\times$, hence the pair $(\beta,\mu)$ is complete determined by elements $\beta\in Z^2(K,\ku^\times)$ and $\mu \in C^1(K,\ku^\times)$ such that 

\begin{align}
    \delta_K(\mu)=\beta (^\sigma \beta), && \mu(^\sigma\mu)=1.
\end{align}

Up to equivalences, the pairs $(\beta, \mu)$ are classified by $\mathbb{G}_n$ the group $n$-th root of unit, see \cite[Theorem 2.1]{MR1448809}. In fact, given $q\in \mathbb{G}_n$ we can define 
\begin{align}
\beta_q(\vec{a},\vec{b})=q^{a_1b_2}, && \mu_q(\vec{a})=q^{-a_1a_2},    
\end{align}where $\vec{a}=(a_1,a_2)$ and $\vec{b}=(b_1,b_2)$.

We now can divide the possible Hopf subalgrebras of $\ku^{C_2}\#_{\mu_q}^{\beta_q} \ku(\Z/n\times \Z/n)$ in two cases depending if $H\leq C_2$ is trivial or not.

\subsubsection{When $H=C_2$ }\label{H non trivial}

In this case the subgroup $L$ is just a subgroup of $\Delta(\Z/n)=\{(a,a):a\in \Z/n\}$. Let $m$ be a divisor of $n$ and $x=n/m$. Hence $L=\langle (x,x)\rangle$ and \[\beta_q(a(x,x),b(x,x))=q^{x^2ab}.\]

The 1-cochain 
\[\tau_q(l(x,x))=q^{-x^2\frac{l(l+1)}{2}}\] satisfies 
$\delta_K(\tau_q)=\beta_q$ for all $l(x,x)\in L$.

In order to compute the obstruction, let us consider

\begin{align*}
    \tilde{\mu}_q(l(x,x))&=\frac{\mu_q(l(x,x))}{\tau_q(l(x,x))^2}\\
    &=q^{-l^2x^2}q^{x^2l(l+1)}\\
    &=q^{x^2l}.
\end{align*}
Then the obstruction vanishes if and only if there is $\zeta\in \mathbb{G}_m$ such that \[q^{x^2}=\zeta^2.\]
If $q^{x^2}=\zeta^2,$ the associated $(\beta,\mu)$-bicharacter is 

\[\eta_{\zeta}(l(x,x))=\tau_q(l(x,x))\zeta^l=q^{-x^2\frac{l(l+1)}{2}}\zeta^l.\]
Moreover every $\eta_{\zeta}$ is invariant.

\subsubsection{When $H$ is trivial}\label{H trivial}

In general, if $H$ is trival the fusion sub\-ca\-te\-go\-ries associated are given just by pairs $(L,\eta)$ where $L\leq K$ is a $G$-invariant subgroup and $\eta \in \widehat{G}$ is $G$-invariant linear character.

Using Goursat's lemma it is straightforward to see that $C_2$-invariant subgroups of $\Z/n\times \Z/n$ corresponds to triple $(B,N,f)$ where $N\subset B\subset \Z/n$ is a tower of groups and $f:B/N\to B/N$ is an automorphism of order two. The subgroup associated with the triple $(B,N,f)$ is 
\[H_{(B,N,f)}=\{(a,b): f(aN)=bN\}.\]

As a conclusion of the subsections \ref{H non trivial} and \ref{H trivial} we obtain the following result.
\begin{theorem}
The Hopf subalgebras of $\ku^{C_2}\#_{\mu_q}^{\beta_q} \ku(\Z/n\times \Z/n)$ come in two types, and correspond to

\begin{itemize}
    \item[Type 1:]  pairs $(m,\zeta)$ where $m$ is divisor of $n$ and $\zeta$ is a root of unity of order $\frac{n}{m}$ such that $q^{\left( \frac{n}{m}\right )^2}=\zeta^2$.
    \item[Type 2:] four-tuples $(B,N,f,\eta)$ where where $N\subset B\subset \Z/n$ is a tower of groups, $f:B/N\to B/N$ is an automorphism of order two, and $\eta:H_{(B,N,f)}\to \ku^\times$ is a character such that $\eta(a,b)=\eta(b,a)$ for all $(a,b)\in H_{(B,N,f)}.$
\end{itemize}
\end{theorem}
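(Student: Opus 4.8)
The plan is to apply the preceding general machinery (Corollary \ref{subcategoriespointed} and its Hopf-algebraic consequence) to the specific action of $C_2$ on $K=\Z/n\times\Z/n$, and then simply collect together the case analysis already carried out in subsections \ref{H non trivial} and \ref{H trivial}. By the corollary, Hopf subalgebras of $\ku^{C_2}\#_{\mu_q}^{\beta_q}\ku(\Z/n\times\Z/n)$ are in bijection with triples $(L,H,\eta)$ where $L\le K$ is $G$-invariant, $H\le C_2$ is normal, and $\eta$ is a $G$-invariant $(\beta_q,\mu_q)$-bicharacter. Since $C_2$ has exactly two subgroups, the parameterization splits into precisely two families according to whether $H=C_2$ (Type 1) or $H$ is trivial (Type 2), and the theorem is just the assertion that these two families are described by the stated combinatorial data.

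First I would handle Type 1, where $H=C_2$. The requirement that $H$ act trivially on $L$ forces $L$ to lie in the diagonal $\Delta(\Z/n)=\{(a,a)\}$, since $\sigma$ acts by swapping coordinates; thus $L=\langle(x,x)\rangle$ with $x=n/m$ for a divisor $m\mid n$, and $L$ is automatically $G$-invariant. The content is then the existence and enumeration of $G$-invariant $(\beta_q,\mu_q)$-bicharacters $\eta:C_2\times L\to\ku^\times$. Here I would invoke the obstruction theory of the previous section: one produces the $1$-cochain $\tau_q(l(x,x))=q^{-x^2 l(l+1)/2}$ trivializing $\beta_q|_{L\times L}$, computes the residual class $\tilde\mu_q(l(x,x))=q^{x^2 l}$, and observes that its obstruction vanishes exactly when $q^{x^2}=\zeta^2$ for some $\zeta\in\mathbb{G}_m$. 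Each such $\zeta$ yields a bicharacter $\eta_\zeta$, and one checks (as noted in subsection \ref{H non trivial}) that every $\eta_\zeta$ is automatically $G$-invariant because $G=C_2$ acts trivially on the diagonal subgroup $L$. This reproduces the Type 1 data $(m,\zeta)$ with the stated constraint $q^{(n/m)^2}=\zeta^2$.

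Next I would treat Type 2, where $H$ is trivial. When $H=\{e\}$ there is no nontrivial trivialization datum to impose on $L$ itself; the bicharacter $\eta:H\times L\to\ku^\times$ degenerates to a genuine $G$-invariant linear character $\eta\in\widehat{L}$, i.e. a character satisfying $\eta(a,b)=\eta(b,a)$ under the coordinate swap. The remaining work is to enumerate the $G$-invariant ($=C_2$-invariant under the swap) subgroups $L\le\Z/n\times\Z/n$. For this I would apply Goursat's lemma, which identifies subgroups of $\Z/n\times\Z/n$ with towers $N\subset B\subset\Z/n$ together with an isomorphism $B/N\to B/N$; the $C_2$-invariance (invariance under swapping factors) translates precisely into the condition that this isomorphism be an automorphism $f$ of order two, giving $L=H_{(B,N,f)}=\{(a,b):f(aN)=bN\}$. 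The Type 2 data $(B,N,f,\eta)$ then records exactly a $G$-invariant subgroup together with a swap-invariant character.

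The genuinely nontrivial pieces are the two obstruction/enumeration computations, and these have already been established in the preceding subsections; the main obstacle in assembling the theorem is therefore bookkeeping rather than new mathematics — one must verify that the constraints $H\triangleleft C_2$ (automatic, as $C_2$ is abelian) and ``$H$ acts trivially on $L$'' interact correctly with the $G$-invariance of $L$ in each case, and that no triple is double-counted across the two types. I expect the subtlest point to be confirming in Type 1 that the diagonal constraint on $L$ is forced precisely by the trivial-action hypothesis, and in Type 2 that Goursat's correspondence respects $C_2$-invariance exactly as the order-two automorphism condition; once these are checked, the theorem follows immediately by partitioning the triples $(L,H,\eta)$ according to the two possibilities for $H$.
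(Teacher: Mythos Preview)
Your approach is exactly the paper's: the theorem is stated there as the immediate conclusion of the two preceding subsections (split according to whether $H=C_2$ or $H$ is trivial), and your outline recapitulates precisely that structure, invoking the obstruction computation for Type~1 and Goursat's lemma for Type~2. One small point to watch is your claim that when $H=\{e\}$ the bicharacter ``degenerates to a genuine $G$-invariant linear character $\eta\in\widehat{L}$''---a $(\beta,\mu)$-bicharacter on $L\times\{e\}$ carries no information, so the character in the Type~2 data does not arise that way; the paper's own subsection is equally elliptical on this point, so you are matching its level of detail.
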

\qed


\end{document}